\newcommand{\g}{\gamma}
\newcommand{\de}{\delta}
\newcommand{\bd}{\pmb{\delta}}
\newcommand{\bD}{\pmb{\Delta}}
\newcommand{\e}{\epsilon}
\newcommand{\ka}{\kappa}
\newcommand{\la}{\lambda}
\newcommand{\m}{\mu}
\newcommand{\n}{\nu}
\newcommand{\x}{\xi}
\newcommand{\ro}{\rho}
\newcommand{\s}{\sigma}
\newcommand{\Si}{\Sigma}
\newcommand{\f}{\phi}
\newcommand{\F}{\Phi}
\newcommand{\Om}{\Omega}
\newcommand{\U}{\Upsilon}
\newcommand{\C}{{\mathbb C}}
\newcommand{\R}{{\mathbb R}}
\newcommand{\N}{{\mathbb{N}}}
\newcommand{\Ht}{\mbox{\texttt{H}}}
\newcommand{\ab}{{\mathbf a}}
\newcommand{\cb}{{\mathbf c}}
\newcommand{\nb}{{\mathbf n}}
\newcommand{\rb}{{\mathbf r}}
\newcommand{\tb}{{\mathbf t}}
\newcommand{\Ab}{{\mathbf A}}
\newcommand{\Bb}{{\mathbf B}}
\newcommand{\Cb}{{\mathbf C}}
\newcommand{\Eb}{{\mathbf E}}
\newcommand{\Hb}{{\mathbf H}}
\newcommand{\Nb}{{\mathbf N}}
\newcommand{\Sbb}{{\mathbf S}}
\newcommand{\Tb}{{\mathbf T}}
\newcommand{\Vb}{{\mathbf V}}
\newcommand{\Zb}{{\mathbf Z}}
\newcommand{\Vt}{\tilde{V}}
\newcommand{\AF}{\mathfrak A}
\newcommand{\Ec}{{\mathcal E}}
\newcommand{\Gc}{{\mathcal G}}
\newcommand{\Hc}{{\mathcal H}}
\newcommand{\Kc}{{\mathcal K}}
\newcommand{\Lc}{{\mathcal L}}
\newcommand{\Xc}{{\mathcal X}}
\newcommand{\dist}{{\rm dist}\,}
\newcommand{\supp}{\hbox{{\rm supp}}\,}
\newcommand{\W}{W}
\DeclareMathOperator{\re}{{\rm Re}\,}
\newcommand{\diam}{\operatorname{diam\,}}
\newcommand{\Tr}{\operatorname{Tr\,}}
\newcommand{\Sib}{\pmb{\pmb{\Si}}}
\newtheorem{thm}{Theorem}[section]
\newtheorem{cor}[thm]{Corollary}
\newtheorem{lem}[thm]{Lemma}
\theoremstyle{definition}
\newtheorem*{rem}{Remark}
\numberwithin{equation}{section}
\begin{document}

\title[Matrix operators]{Spectral asymptotics and estimates for matrix Birman-Schwinger operators with singular measures}

\author{Grigori Rozenblum}
\address{Chalmers Univ. of Technology, Gothenburg, Sweden }
\email{$\mathrm{grigori@chalmers.se}$}

\author{Grigory Tashchiyan${}^{\dag}$}
\address{St.Petersburg University for Telecommunications}
\begin{abstract} We consider operators of the form $\Tb=\Ab^*(V\mu)\Ab$ in $\R^\Nb$, where $\Ab$ is a pseudodifferential operator of order $-l$, $\mu$ is a compactly supported singular measure, order $s>0$ Ahlfors-regular, and $V$ is a weight function on the support of $\mu$. The  scalar type operator $\Ab$ and the weight function $V$ are supposed to be $m\times m$ matrix valued. We establish Weyl type asymptotic formulas for singular numbers and    eigenvalues of $\Tb$ for $\mu$ being the natural measure on a compact Lipschitz surface. For a general  Ahlfors-regular measure $\m$, we prove that the previously found upper spectral estimates are order sharp. 
\end{abstract}

\maketitle


\section{Introduction}\label{intro}
In recent years, there was an essential progress in the study of spectral properties of differential and pseudodifferential operators in unusual, often singular environments. This concerns, in particular,  Birman-Schwinger type operators of the form
 \begin{equation}\label{BS operator}
 \Tb=\Tb(\Ab,\Vb)=\Ab^* \Vb\Ab,
 \end{equation}
 where $\Ab$ is a negative order pseudodifferential operator and $\Vb$ is a properly defined 'weight'. Initially, in pathbreaking papers by M.Birman and M.Solomyak, see  \cite{BSMMO}, eigenvalue estimates and asymptotic formulas were obtained for  $\Vb$ being the operator of multiplication by a function $V$ in a certain class of integrable functions, while $\Ab$ is the resolvent of an elliptic operator.  In recent developments, $\Ab$  can be the resolvent of a subelliptic operator, see, e.g.,  \cite{PongeHeis}, \cite{Ponge.matrix}, \cite{MSZ}, \cite{SZ}, \cite{SXZ23}. On the other hand, a series of results on the eigenvalue distribution was obtained for $\Vb$ being, formally, a measure of the form $\Vb=V\mu,$ where $\mu$ is a singular measure  and $V$ is a real-valued weight function (density), see \cite{RTInt}, \cite{RT}. These results led to  a number of important developments in analysis and  noncommutative geometry, extending known constructions beyond their initial realm. A particular topic of application is the spectral study of the resolvent difference for singular perturbed Schr{\"o}dinger type equations, see \cite{GRRobin}, where $\mu$ is the Hausdorff measure on a Lipschitz surface $\Si$ of an arbitrary codimension and $V$ is a weight function on $\Si$.

In the scalar case, the proofs of the  results on the eigenvalues  asymptotics for such operators of the form \eqref{BS operator} in \cite{RT}, \cite{RS}, \cite{RTInt} in this singular setting are essentially based upon their reduction to an eigenvalue problem for potential type integral operators on $\Si$, for which spectral results had been previously established in \cite{RTLip1}, \cite{RTLip2}. This reduction, on its crucial step, requires that the weight function $V(x)$ is \emph{non-negative} on $\Si$. If this is not the case, but $V$ is still real-valued, a complicated approximative  sign-separation procedure is applied, enabling one to \emph{spatially} separate  positive and negative parts $V_{\pm}$ of $V$, so that the leading term in the asymptotics of positive eigenvalues of $\Tb$ is determined by the  positive part of $V$, ditto for the negative eigenvalues. 
 When passing  to operators \emph{acting   
in the space of vector-functions}, with $V(x)$ being  a Hermitian matrix-function on $\Si$, the reduction to integral operators goes through in the standard way for a sign-definite  matrix  $V(x)$. However, the above  sign-separation procedure breaks down if the matrix weight $V(x)$ is not sign-definite. Therefore, we failed in \cite{RT}, \cite{RS} to justify  the asymptotics of eigenvalues of $\Tb$ for  this case. Another important problem, where the reduction to integral operator fails, is the question on the asymptotics of singular numbers of the operator $\Tb$, in the case when the weight $V(x)$ is a non-Hermitian matrix function (a nonreal-valued function, in the scalar case).

In  the present paper we address these two questions. We propose an alternative sign-separation procedure, which enables us to find the separate asymptotics of positive and negative eigenvalues for a Hermitian non-signdefinite matrix weight $V(x)$ and the asymptotics of singular values for a non-Hermitian matrix weight function. Our approach uses a perturbation theorem in an abstract setting; this theorem may turn out to be useful for some other spectral problems as well.

Asymptotic formulas for operators \eqref{BS operator} are obtained for measure $\mu$ being the Hausdorff measure on a Lipschitz surface (and on certain more complicated sets.) This restricts, in particular, their application to   the case of sets of an integer Hausdorff dimension only. For a fractional dimension, it is known that it may happen that the eigenvalues (singular numbers) do not have power asymptotics. Our general abstract result enables us to prove that, under some mild regularity conditions, our upper spectral estimates are order sharp, they are supported by lower estimated of the same power order.

\textbf{
Statement by G.Rozenblum.} This paper continues a series of  joint studies with Grigory (Grisha) Tashchiyan, starting with \cite{RTLip1}. We were close friends since our student days, in 1960-s; after that, we both were PhD students under the inspiring supervision by Prof. M.Z. Solomyak. Co-operation with Grisha was quite productive, we exchanged ideas and assisted each other in dealing with technical difficulties, notwithstanding being at times rather far apart geographically. In spite of a progressing illness, Grisha continued thinking about this project and kept producing nice ideas. Unfortunately, he didn't live to see the final result of our work.

\section{Setting. Sign-separation}\label{setting}
\subsection{The sign problem} In the process of the study of spectral estimates and asymptotics, one considers rather  often operators involving some function acting as weight. The first and quite typical example is the weighted Laplace eigenvalue problem 
\begin{equation}\label{weight.1}
  -\Delta u =\lambda V(x) u,
\end{equation}
 say, with Dirichlet boundary conditions, see \cite{BSFaa}. Since long ago, it was known that in the case when the real-valued weight function $V$ is sign-definite, say, non-negative, the problem \eqref{weight.1} possesses only a sequence of positive eigenvalues, with asymptotic behavior determined by $V$. What is more interesting is that if $V$ is \emph{not} sign-definite, there are two sequences of eigenvalues, positive and negative ones, moreover, the asymptotics of positive eigenvalues is determined exclusively by the positive part of $V$, while the asymptotics of negative ones is determined by its negative part, with both having the same power order. Further on, this effect of sign-separation  was observed for quite a lot of non-signdefinite spectral problems including  the ones where this non-signdefiniteness is present in the leading term, as, e.g., in \cite{BSnegative}, for negative order pseudodifferential operators with matrix symbol. Although such sign-separation  property looks quite natural, its proving is often rather nontrivial. Moreover, there are examples of spectral problems where visibly nontrivial  negative part of the weight produces, nevertheless, only finitely many negative eigenvalues, while the asymptotics of eigenvalues is determined only by the positive part of the weight, see, e.g.,  \cite{BM}, \cite{PR},  \cite{GRlower}.

  There are several standard ways of proving eigenvalue asymptotics in the non-signdefinite case. First of all, if it is possible, the problem is solved by means of  a reduction to the fundamental result by M.Birman and M.Solomyak \cite{BSnegative} about the spectral asymptotics of negative order 
pseudodifferential operators, where the principal (matrix) symbol is not   supposed to be sign-definite. The other way consists in reducing algebraically the problem to the sign-definite one, where the asymptotic results are already known. This was done in quite a number of publications,  \cite{Ponge.matrix} being, probably, the latest one. Sometimes, when the operator in question is not a pseudodifferential one, such reduction is not available. Here, some  localization may help. Say, in \cite{RS}, \cite{RT},  \cite{RTInt}, using order sharp spectral estimates,  obtained aforehand, the positive and negative parts of the weight were spatially separated by means of a proper perturbation, and then the results on sign-definite weights could be applied. In these papers, however, since the weight was fairly singular, the Birman-Solomyak theorems, as in \cite{BSnegative}, could not be applied directly, however we had already at disposal the results by the authors, on potential type operators on Lipschitz surfaces, see \cite{RTLip1}, \cite{RTLip2}, and to such operators, the localized problem could be reduced. Unfortunately, this latter approach cannot be applied for non-signdefinite \emph{matrix} problems, since  the spatial separation is not possible here. An alternative approach was proposed recently in  \cite{MSZLMF}, \cite{SZFA}. There, for a non-signdefinite matrix weight, an algebraic trick was applied, enabling one still to separate \emph{spectrally}, but not spatially, the positive and negative parts of the operator;  this approach, however, is  based upon commutator estimates valid for  smooth weights only.  In this paper, we modify this approach, enabling one to handle singular weights.
 \subsection{Spectral characteristics of compact operators}
 Further on, for a \emph{self-adjoint} compact operator $\Tb$, we denote by $\Tb_{\pm}$ the positive, resp., negative part of $\Tb$; this means
\begin{equation*}
  \Tb_\pm=\frac12(|\Tb|\pm\Tb),
\end{equation*}
where $|\Tb|$ is the absolute value of $\Tb$ in the sense of spectral theory: for any bounded operator $\Tb$, as usual, $|\Tb|=(\Tb^*\Tb)^{\frac12}$.

 For a given compact operator $\Tb$, we denote by $n(\la, \Tb), \, \la>0$ the counting function for the singular values $\s_j(\Tb)$ of $\Tb$:
 \begin{equation}\label{n(mu)}
 n(\la,\Tb)=\#\{j:\s_j(\Tb)>\la\};
 \end{equation}
  if the operator $\Tb$ is self-adjoint, the counting functions of its positive, resp. negative eigenvalues $\la_{\pm}(\Tb)$ are introduced,
  \begin{equation}\label{n pm}
    n_{\pm}(\la,\Tb)=\#\{j:\pm\la_j^{\pm}(\Tb)>\la\};
  \end{equation}
the singular- and eigenvalues are counted in \eqref{n(mu)}, \eqref{n pm} respecting their multiplicity. Of course, for a self-adjoint operator, 
\begin{equation*}
   n(\la,\Tb)=n_+(\la,\Tb)+n_-(\la,\Tb)=n_+(\la, |\Tb|).
\end{equation*}
The traditional topic in operator theory is the study of the behavior of these quantities as $\la\to 0$. To describe asymptotic estimates, for a fixed exponent $\theta,$ we introduce the notations 
 \begin{equation*}
\begin{array}{l}{\bD^{\theta}(\Tb)}\\{\bd^{\theta}(\Tb)}\end{array}
=\begin{array}{l}{\limsup}\\{\liminf}\end{array}_ {\la\to 0}\la^{\theta}n(\la,\Tb),  
 \end{equation*}
similarly, for a self-adjoint operator $\Tb$, the notations $\bD_{\pm}^{\theta}(\Tb)$, $\bd_{\pm}^{\theta}(\Tb)$ are  introduced. If the upper ($\bD$) and lower ($\bd$) asymptotic coefficients coincide for some choice of the sign ${\pm}$, this means, when the eigenvalue asymptotics holds for this sign, we use the symbol $\nb^\theta(\Tb)$ for their common value, for example,
\begin{equation*}
  \nb^{\theta}_\pm(\Tb)=\bd^{\theta}_\pm(\Tb)=\bD^{\theta}_\pm(\Tb)=\lim_{\la\to 0}\la^{\theta}n_\pm(\la,\Tb).
\end{equation*}
Following the notations of the Russian school of spectral theory, we denote by $\Sib_\theta$ the weak Schatten ideal, this means the set of operators $\Tb$ satisfying $\bD_\theta (\Tb)<\infty$.
 The separable ideal $\Sib_\theta^\circ$ consists of operators satisfying $\bD_\theta(\Tb)=0.$
\subsection{Main results} According to the results of \cite{RS}, \cite{GRInt}, \cite{RT}, for an operator \eqref{BS operator} with $\Vb=V\mu,$ and an order $-l$ pseudodifferential operator $\Ab$ (in the next section, we give a detailed definition of  the operator \eqref{BS operator} in this singular setting)  with \emph{scalar} real-valued density $V$ and  measure $\m$ being the Hausdorff measure on a Lipschitz surface $\Si$ of dimension $s$, the asymptotics of the eigenvalues is valid:
\begin{equation*}
  \nb^{\theta}_{\pm}(\Tb)=\Cb_{\pm}(\Ab,\Si, V), \, \theta=\frac{s}{s-\Nb+2l},
\end{equation*}
with coefficients $\Cb_{\pm}=\Cb_{\pm}(\Ab,\Si, V)$
calculated according to
\begin{equation}\label{Coeff.Scalar}
  \Cb_{\pm}=\int_{\Si}\ro_{\Ab}(x)V_{\pm}^{\theta}(x)\m(dx),
\end{equation}
 where $\ro_{\Ab}(x)$ is a density on $\Si$ determined by the principal symbol of operator $\Ab$.
 
 For a measure $\m$ and the exponent $\theta$, we introduce the notation  $L_{(\theta),\m}$, which  stands for $L_{\theta,\m}$ if $\theta>1$, for $L_{1,\m}$  if $\theta<1$, and, finally, for the Orlicz space $L_{\Psi,\m}$ corresponding to the function $\Psi(t)=(t+1)\log(t+1)-t$,  if $\theta=1$.  By $\|.\|_{(\theta),\m}$ we denote the norm in this space.

In our main results, we extend \eqref{Coeff.Scalar} to two new situations.
\begin{thm}\label{Main theorem pm} Let $\Ab$ be an order $-l$ scalar type pseudodifferential operator, of the form
\begin{equation*}
 \Ab=\Ab_0\otimes\Eb_m,
\end{equation*}
where $\Ab_0$ is a scalar pseudodifferential operator, $\Eb$ is the $m\times m$ unit matrix, and $V(x),\, x\in\Si$, is an $m\times m$ matrix density, $V\in L_{(\theta),\m}.$
Then for the singular numbers of operator \eqref{BS operator}, the asymptotic
formula holds
\begin{equation}\label{snumbers.as}
  \nb^{\theta}(\Tb)=\int_{\Si}\ro_{\Ab}(x)\Tr(|V(x)|^{\theta})\m(dx). 
\end{equation}
If, moreover, the matrix $V(x)$ is Hermitian, then
\begin{equation}\label{eigen.pm}
   \nb_{\pm}^{\theta}(\Tb)=\int_{\Si}\ro_{\Ab}(x)\Tr(V_{\pm}(x)^{\theta})\m(dx),
\end{equation}
with the same density $\ro_{\Ab}(x)$ as in the scalar case.
 \end{thm}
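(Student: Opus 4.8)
The plan is to reduce both assertions to a single \emph{abstract spectral separation} principle, used together with the already available asymptotics for \emph{sign-definite} matrix weights. Recall first that for a non-negative density $W\ge 0$, $W\in L_{(\theta),\mu}$, the operator $\Ab^*(W\mu)\Ab$ reduces in the standard way to a potential-type integral operator on $\Si$, so by \cite{RTLip1}, \cite{RTLip2}, \cite{RT}, \cite{RS} one has $\nb^{\theta}(\Ab^*(W\mu)\Ab)=\int_{\Si}\ro_{\Ab}(x)\Tr(W(x)^{\theta})\,\mu(dx)$. In the Hermitian case I split $V=V_+-V_-$, $V_\pm\ge 0$, so that $\Tb=\Tb_+-\Tb_-$ with $\Tb_\pm=\Ab^*(V_\pm\mu)\Ab\ge 0$, each $\Tb_\pm$ having the asymptotics above with $W=V_\pm$; the whole task is then to show that the positive, resp.\ negative, part of the spectrum of $\Tb_+-\Tb_-$ is governed, up to $o(\la^{-\theta})$, by $\Tb_+$, resp.\ $\Tb_-$, alone — i.e.\ to separate the two parts \emph{spectrally}, not spatially. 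For the singular numbers of a non-Hermitian $V$ I use the pointwise polar decomposition $V=U|V|$, $|V|=(V^*V)^{1/2}$, and write $\Tb=B_1^*B_2$ with $B_2=|V|^{1/2}\gamma_\mu\Ab$, $B_1=|V|^{1/2}U^*\gamma_\mu\Ab$ (here $\gamma_\mu$ is the trace onto $L_2(\Si,\mu)^m$, and the scalar-type structure $\Ab=\Ab_0\otimes\Eb_m$ is used). Since $B_2^*B_2=\Ab^*(|V|\mu)\Ab$ is exactly the known sign-definite case, and $B_1=B_2U^*+E$ with $E=|V|^{1/2}[\,U^*,\gamma_\mu\Ab_0\otimes\Eb_m\,]$, the target is to show $\s_j(\Tb)=\s_j(B_2^*B_2)+o(\la^{-\theta})$, which comes down to the error operator $E^*B_2$ lying in $\Sib_\theta^{\circ}$.

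The abstract ingredient I would isolate is a perturbation theorem of the following shape: if $\Tb=\Tb_+-\Tb_-$ with $\Tb_\pm\ge 0$ compact, $\Tb_\pm\in\Sib_\theta$, and the interaction operator $\Tb_+^{1/2}\Tb_-^{1/2}$ belongs to the separable ideal $\Sib^{\circ}_\theta$, then $n_\pm(\la,\Tb)=n(\la,\Tb_\pm)+o(\la^{-\theta})$; together with the companion statement that $\s_j(B_1^*B_2)=\s_j(B_2^*B_2)+o(\la^{-\theta})$ as soon as $E^*B_2\in\Sib^{\circ}_\theta$. The proof is purely operator-theoretic: pass to the block form of $\Tb$ with respect to the closures of $\Ran\Tb_+$ and $\Ran\Tb_-$; smallness of the interaction forces the off-diagonal blocks into $\Sib^{\circ}_\theta$, so by the classical perturbation lemma for weak ideals $\Tb$ has the same $n_\pm$-asymptotics as its block-diagonal part $\Tb_+\oplus(-\Tb_-)$, whose $n_\pm$ is $n(\la,\Tb_\pm)$; the two-sided bounds come from the Ky Fan inequalities $n_\pm(\la_1+\la_2,X+Y)\le n_\pm(\la_1,X)+n(\la_2,Y)$ applied with the small term. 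I expect this step to be essentially routine once the hypothesis is correctly calibrated.

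Next I verify the hypothesis in the \emph{regular} case. Assume $V$ is Lipschitz (or H\"older) and, to begin with, invertible at every point of $\Si$, so that $U(x)$ and the spectral projections $P_\pm(x)$ onto the positive/negative eigenspaces of the Hermitian $V(x)$ are as regular as $V$; note that $P_\pm$ and $|V|$, being functions of $V$, commute. Using $V_+V_-=0$ one computes that both the interaction operator $\Tb_+^{1/2}\Tb_-^{1/2}$ and the error $E^*B_2$ are built from a commutator of the form $[\,\gamma_\mu\Ab_0\Ab_0^*\gamma_\mu\otimes\Eb_m,\ \cdot\,]$ with a \emph{regular} matrix multiplier ($V_\pm$, resp.\ $U$). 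The decisive estimate is then that commuting the scalar potential-type operator $\gamma_\mu\Ab_0\Ab_0^*\gamma_\mu$ past a Lipschitz (or H\"older) multiplier yields an operator of strictly lower order, hence lying in $\Sib_{\theta'}$ with some $\theta'<\theta$, a fortiori in $\Sib^{\circ}_\theta$; this is exactly where the mapping and smoothing properties of the potential operators from \cite{RTLip1}, \cite{RTLip2} enter. The abstract theorem then gives \eqref{snumbers.as} and \eqref{eigen.pm} for such $V$. Points where $V(x)$ degenerates, and crossings of eigenvalues through $0$, are absorbed by peeling off the part of $\Si$ on which the relevant eigenvalue of $V(x)$ has modulus $<\delta$ — a piece whose contribution is controlled (order-sharp upper bound) and tends to $0$ as $\delta\to0$ — on the complement of which $U$, $P_\pm$ are genuinely regular.

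Finally I remove the regularity assumption. Given $V\in L_{(\theta),\mu}$, approximate it in $L_{(\theta),\mu}$ by Lipschitz densities $V^{(\ve)}$. By the order-sharp \emph{upper} estimates for operators \eqref{BS operator} — i.e.\ their continuity in $\|\cdot\|_{(\theta),\mu}$ — one gets $\bD^{\theta}\bigl(\Ab^*((V-V^{(\ve)})\mu)\Ab\bigr)\le C\|V-V^{(\ve)}\|_{(\theta),\mu}^{\theta}\to 0$; moreover $V^{(\ve)}_\pm\to V_\pm$, $|V^{(\ve)}|\to|V|$ in $L_{(\theta),\mu}$, so the coefficients $\int_{\Si}\ro_{\Ab}\Tr((V^{(\ve)}_\pm)^{\theta})\,\mu(dx)$ and $\int_{\Si}\ro_{\Ab}\Tr(|V^{(\ve)}|^{\theta})\,\mu(dx)$ converge to their counterparts for $V$. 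Feeding this into the Ky Fan two-sided inequalities and letting $\ve\to0$ upgrades the asymptotics from $V^{(\ve)}$ to $V$. I expect the genuine difficulty to lie in the \emph{interplay} of the last two steps: the commutator estimate underlying the spectral separation is, by its nature, available only for regular weights, so one must regularize, separate, and pass to the limit, and the whole scheme rests on having a priori upper bounds sharp enough to absorb the regularization error uniformly. A secondary obstacle is to state the abstract theorem with a hypothesis that is simultaneously strong enough to force the separation and checkable via the (lower-order) commutator estimate, and to handle cleanly the non-invertibility of $V(x)$ and the eigenvalue crossings of the regular approximants.
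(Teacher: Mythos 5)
Your overall skeleton --- regularize $V$ in $L_{(\theta),\m}$ and use the order-sharp upper bounds to pass to the limit, separate the spectrum \emph{spectrally} rather than spatially, and reduce to the known asymptotics for sign-definite weights via the gain of one order in commutators of a regular multiplier with the pseudodifferential operator --- is the same as the paper's, and the abstract principle you posit (small interaction between $\Tb_+$ and $\Tb_-$ forces $n_\pm(\la,\Tb)=n(\la,\Tb_\pm)+o(\la^{-\theta})$) is in itself correct. But your proof sketch of it via a ``block form with respect to the closures of $\Ran\Tb_+$ and $\Ran\Tb_-$'' does not work as stated: these subspaces are in general neither orthogonal nor complementary, so there is no such block decomposition; what is needed there is a variational argument (the upper bound $n_\pm(\la,\Tb)\le n(\la,\Tb(V_\pm))$ is free, and the lower bound is obtained by restricting to the spectral subspace of $\Tb_+$ and removing the $o(\la^{-\theta})$ directions on which the interaction is large). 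More importantly, your hypothesis is phrased through the \emph{abstract} square roots $\Tb_\pm^{1/2}$, which are not expressible via $V_\pm^{1/2}\g\Ab$; the assertion that $\Tb_+^{1/2}\Tb_-^{1/2}$ ``is built from a commutator of $\g\Ab_0\Ab_0^*\g$ with a regular multiplier'' is therefore not a computation you can actually carry out --- at best you control products such as $(V_-^{1/2}\g\Ab)(V_+^{1/2}\g\Ab)^*$, and the abstract theorem must be recalibrated to a hypothesis of that form. The same issue appears in your singular-number step, where you need the polar factor $U$ to act as a unitary multiplier.

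The second, and decisive, gap is the treatment of degeneracies. Your verification requires $U$, $P_\pm$, $V_\pm$ to inherit the regularity of $V$, which fails precisely where $V(x)$ is singular or an eigenvalue crosses zero, and this is the generic situation ($|V|$ and $V_\pm$ need not be smooth even for smooth $V$). The proposed ``peeling off'' of the set where an eigenvalue has modulus $<\delta$ does not repair this: the cutoff is a characteristic function, whose commutator with the potential-type operator has \emph{no} gain of order (this is exactly the obstruction that kills spatial separation in the matrix case, as discussed in Sect.~\ref{matrix breakdown}); the bad set need not shrink as $\delta\to0$ when an eigenvalue vanishes on a set of positive $\m$-measure; and one cannot in general choose Lipschitz approximants invertible at every point of $\Si$. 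The paper's proof avoids $U$ and $P_\pm$ altogether: it compares $|\Tb(V)|^2=\Tb(V)^*\Tb(V)$ with $\Tb(Y_\e)^2$, where $Y_\e=(|\Vt|^2+\e^2)^{1/2}$ is smooth \emph{regardless} of how degenerate $\Vt(x)$ is, verifies that all commutator corrections lie in $\Sib_{\theta}^{\circ}$, and then passes from closeness of the squares to closeness of $|\Tb(V)|$ and $\Tb(|\Vt|)$ by the Birman--Koplienko--Solomyak inequality ($\bD^{p}(\Tb_1^2-\Tb_2^2)\le c\e$ implies $\bD^{2p}(\Tb_1-\Tb_2)\le c'\e^{1/2}$ for non-negative $\Tb_1,\Tb_2$). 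This regularized matrix absolute value together with the BKS step is the ingredient your proposal is missing; without it, or an equivalent device, the degenerate case is not covered and the argument does not close.
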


Theorem \ref{Main theorem pm} concerns only the Hausdorff  measure of an integer dimension on a Lipschitz surface. To consider more general measures, we recall the following definition, see, e.g., \cite{David}.

 Let $\m$ be a finite Borel measure in $\R^\Nb$ with compact support $\Si$. For $s\in(0,\Nb]$, we set $\m\in\AF^{s}$ if $\m$ satisfies 
 \begin{equation}\label{Ahl.def}
   \cb_- r^s\le \m(B(x,r))\le \cb_+r^s, \, x\in \supp(\m), \,r\le \diam\supp(\m), 
 \end{equation}
 where $B(x,r)$ is the closed ball of radius $r$ centered at $x$ and $\cb_-=\cb_-(\m)>0,$ $\cb_+=\cb_+(\m)<\infty$ are   some constants. Such measures are called Ahlfors $s$-regular. If only the left, resp., right inequality in \eqref{Ahl.def} holds, the measure is called lower-, resp., upper $s$-regular; the corresponding class is denoted by $\AF^s_-,$ resp., $\AF^s_+$. Note that upper regular measures may not contain point masses.  
In  particular, the Hausdorff measure on a Lipschitz surface of dimension $s$ belongs to $\AF^s$. There are examples (see, e.g., \cite{NaiSol}, \cite{SolVer}) showing that for general measures $\mu\in\AF^s$,  operator \eqref{BS operator} with $\Vb=V\mu,$ may fail to have a power eigenvalue asymptotics. For a scalar function $V>c>0$, two-sided power estimates for $n(\la,\Tb)$ were obtained for such measures, see \cite{Tr}, \cite{RT}, \cite{RS}. Here, in our second main theorem, we show that on the level of estimates, the spectral sign-separation effects for matrix densities $V(x)$, similar to the one in Theorem \ref{Main theorem pm}, hold for general measures  $\m\in\AF^s$ under some mild restrictions imposed  on the matrix density $V$. Namely, see Theorem \ref{Thm.lower} below for details, if $V\in L_{(\theta),\m},$ $\theta=\frac{s}{s-\Nb+2l}$, and $|V(x)|>c>0$ on $\Si\cap\Om$ for some open set $\Om\subset\R^\Nb$ with nonvoid intersectionwith $\Si$, under the assumption that the operator $\Ab$ is elliptic in  $\Om$, the two-sided singular numbers estimate 
\begin{equation*}
  0<\bd^{\theta}(\Tb)\le \bD^{\theta}(\Tb)<\infty
\end{equation*}
holds. For a Hermitian matrix density $V(x)$, if $V(x)>c>0$ for $x\in\Si\cap\Omega,$ the positive eigenvalues satisfy
\begin{equation*}
   0<\bd_+^{\theta}(\Tb)\le \bD_+^{\theta}(\Tb)<\infty.
\end{equation*}

 \section{Operators with singular measures}\label{Sect.measures}
   \subsection{The Birman-Schwinger type operators}
  In this paper, we consider  Birman-Schwinger type operators.  
  \begin{equation}\label{general form}
    \Tb=\Tb(\m,\Ab,W, U)=\Zb^*U\Zb,\, \Zb=W\g\Ab,
  \end{equation}
   where $\Ab$ is a pseudodifferential operator of order $l<0$, $\m$ is a singular compactly supported measure belonging to $\AF^s$; next, $W\ge0,$ $U$ are  $\m$-measurable $N\times N$-matrix functions, $U\in L_{\infty,\m}$. By $\g$ we denote the operator of restriction of functions in the Sobolev space $\Ht^l(\R^\N)$ to $\Si$.
   The definition of such operators with singular measures is discussed in detail in \cite{RT}, \cite{RTInt}. 
The trace operator
 $\g:\Ht^{l}(\R^\Nb)\to L_{2,\mu}$ is defined first on $\Ht^{l}(\R^\Nb)\cap C(\R^\Nb)$ and, if this operator is bounded as acting to $L_{2,\m}$, it can be  extended by continuity to the whole of $\Ht^l(\R^\Nb)$. The latter property, as explained in \cite{RTInt}, \cite{RT},  holds provided $2l>\Nb-s$. If this is the case, operator $\g\Ab$ is a bounded operator from $L_2(\R^\Nb)$ to $L_{2,\m}$.

In this paper, we restrict our considerations to the case when $\Ab$ is a scalar-type operator, this means that it has the form $\Ab=\Ab_0\otimes \Eb_m$, with a scalar pseudodifferential operator $\Ab_0.$ More general cases, when $\Ab$ is a truly matrix, not a scalar-type, operator, or even  for operators of a non-symmetric form $\Tb(\Ab_1,\Ab_2,W_1,W_2)=(\W_1\g\Ab_1)^*(W_2\g\Ab_2),$   are considerably more complicated and will be studied on a later occasion, with analysis, not that soft as here.
 About the weight matrix-function $V=W^*U W$, it is supposed that the composition $\Zb=W\g\Ab $ is a bounded, moreover, compact operator  from $L_2(\R^\Nb)$ to $L_{2,\m}$.

This condition is satisfied  provided 
   \begin{equation}\label{Vcondition}
     W^2\in L_{(\theta),\mu}, \, \theta=\frac{s}{s-\Nb+2l}, \, U\in L_{\infty,\m}
   \end{equation}
   (a detailed discussion of this condition, as well as references to sources, mainly to \cite{MazBook}, can be found in \cite{RS}, \cite{RT}, \cite{GRLT}.)  Further on, the measure $\m$ and the operator $\Ab$  will be usually fixed, and we will omit them in  notations, as long as this does not cause confusion.

The operator $(W\g\Ab)^*(W\g\Ab)$ is, of course, self-adjoint, so, possible non-self\-adjointness can be only caused here  by the matrix-function $U(x)$ in \eqref{general form} only. If, moreover $U$ is Hermitian, then $\Tb$ \eqref{general form} is self-adjoint. 

Operator $\Tb=\Tb(W, U)$ can be described by means of its quadratic form

\begin{equation}\label{QformDefinition}
  \tb[u]=\int \langle U(x)W(x)(\g\Ab u)(x), W(x)(\g\Ab u)(x)\rangle \mu(dx), 
\end{equation}
where angle brackets denote the inner product in $\C^{m}.$ The expression in
\eqref{QformDefinition} can be re-written as 
\begin{equation*}
  \tb[u]=\int \langle V(x)(\g\Ab u)(x),(\g\Ab u)(x)\rangle\mu(dx), \, V=W^*U W;
\end{equation*}
this means that the operator $\Ab$ is determined not by the weight matrix-functions $W(x), U(x)$ separately, but only by  their product, $V=W^*U W.$ Thus, we will leave it in the notation: $\Tb=\Tb(V)$. Note, also, that, conversely, any matrix-function $V(x)\in L_{(\theta),\m}$ can be factorized as $V(x)=W(x)^* U(x) W(x)$, with $W^2\in L_{(\theta),\m}$ and $U\in L_{\infty,\m}$.

For general operators of the form \eqref{general form}, we will study the behavior of singular values, $\s_j(\Tb)$. In the self-adjoint case,
$V=W^*UW,$ where $U$ is a \emph{Hermitian}  matrix function in $L_{\infty,\m}$, we will and consider also the eigenvalues $\pm\la_j^{\pm}(\Tb)$.
    \subsection{Spectral estimates}\label{Sub.estim}
   Main results in  \cite{RS}, \cite{RT}, which  we will use in our analysis, are the following.
   \begin{thm}\label{Est.Thm} Let the conditions \eqref{Vcondition} be satisfied. Then the operator $\Zb\equiv W\g\Ab:L_2(\R^\Nb)\to L_{2,\m}$ is bounded  and, moreover, its singular numbers satisfy 
     \begin{equation}\label{MainEstimate}
     \bD^{2\theta}(\Zb)\le C \|W^2\|_{(\theta),\m}^{\theta},
   \end{equation}
   where the constant $C$ depends only on the measure $\m$, the operator $\Ab$ and the parameters $\Nb,s,l$, but not on the weight function $W$. 
   \end{thm}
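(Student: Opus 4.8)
The plan is to peel away, by soft arguments, the matrix structure and the pseudodifferential calculus, obtaining a Cwikel--Birman--Solomyak-type estimate for a scalar weighted Bessel potential against the $s$-regular measure $\m$, and then to prove that estimate by Birman and Solomyak's variational technique, recast for a singular measure.

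\emph{Step 1: reductions.} Since $\s_j(\Zb)^2=\lambda_j(\Zb\Zb^*)$, one has $\bD^{2\theta}(\Zb)=\bD^{\theta}(\Zb\Zb^*)$, where $\Zb\Zb^*=M_W\,\g\Ab\Ab^*\g^*\,M_W$ acts in $L_{2,\m}$ and $M_W$ is the matrix multiplication operator. Because $W\ge0$, put $\phi:=\|W(\cdot)\|$ (operator norm) and $\widetilde W:=W/\phi$, so $M_W=M_{\widetilde W}M_\phi$ with $\|M_{\widetilde W}\|\le1$; then $\Zb=M_{\widetilde W}\big((\phi\,\g\Ab_0)\otimes\Eb_m\big)$, hence $\s_j(\Zb)\le\s_j\big((\phi\,\g\Ab_0)\otimes\Eb_m\big)$, and since $T\otimes\Eb_m$ carries the singular values of $T$ with multiplicity $m$,
\[
\bD^{2\theta}(\Zb)\le m\,\bD^{2\theta}(\phi\,\g\Ab_0),\qquad\phi^2\in L_{(\theta),\m},\quad\|\phi^2\|_{(\theta),\m}\le C_m\|W^2\|_{(\theta),\m},
\]
with a \emph{scalar} weight $\phi$. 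Next, covering $\supp\m$ by small balls with a subordinate partition of unity: by pseudolocality the off-diagonal parts of $\Ab_0$ are smoothing, so after composition with $\g$ and $M_\phi$ they have super-polynomially decaying singular values and lie in $\Sib_{2\theta}^{\circ}$. On each patch, replacing $\Ab_0$ by its principal part, commuting the resulting zero-order factor past $\g$ via the calculus, and discarding the lower-order terms (negligible by an order count using the model estimate below at the exponent $\theta_\varepsilon=\tfrac{s}{s-\Nb+2l+2\varepsilon}<\theta$ together with $L_{(\theta),\m}\subset L_{(\theta_\varepsilon),\m}$), it remains to prove
\[
\bD^{\theta}\!\big(M_\phi\,\g(\1-\D)^{-l}\g^{*}M_\phi\big)\le C\,\|\phi^{2}\|_{(\theta),\m}^{\theta},
\]
where $\g(\1-\D)^{-l}\g^{*}$ is the integral operator in $L_{2,\m}$ with kernel $G_{2l}(x-y)$, $x,y\in\Si$, $G_{2l}\ge0$ the Bessel kernel --- smooth and exponentially decaying off the origin, with $G_{2l}(z)\le C|z|^{2l-\Nb}$ if $2l<\Nb$, $\le C(1+|\log|z||)$ if $2l=\Nb$, and $\le C$ if $2l>\Nb$.

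\emph{Step 2: the potential-type estimate.} This is the core, and is of the kind studied for potential operators on Lipschitz surfaces. I would decompose $G_{2l}=\sum_{k\ge0}g_k$ into annular layers supported in $\{|z|\le2^{-k+1}\}$, with $\|g_k\|_\infty$ of order $2^{-k(2l-\Nb)}$ (resp.\ $k$, resp.\ $1$) when $2l<\Nb$ (resp.\ $=\Nb$, resp.\ $>\Nb$), so that $M_\phi\g(\1-\D)^{-l}\g^*M_\phi=\sum_k\Qb_k$, $\Qb_k$ having kernel $\phi(x)g_k(x-y)\phi(y)$. For each $k$ one partitions a fixed cube containing $\Si$ into a grid of cubes $Q$ of side $h_k=2^{-k}$; since $g_k$ links only neighbouring cubes, $\Qb_k$ splits, up to the fixed factor $3^\Nb$, into a direct sum of blocks over this grid. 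Two observations feed the machinery. First, by the elementary inequality $|\langle g_k(\cdot-\cdot)v,v\rangle|\le\|g_k\|_\infty\|v\|_{L_{1,\m}}^2$ applied with $v=\phi u$, together with Cauchy--Schwarz, the block of $\Qb_k$ over $Q$ has operator norm $\le C\|g_k\|_\infty\int_Q\phi^2\,d\m$, with no boundedness of $\phi$ needed. Second, rescaling $Q$ to the unit cube turns $g_k$ into a fixed smooth (indeed off-diagonal analytic) profile on a unit $s$-regular set, where $\m|_Q$ has mass $\m(Q)\le\cb_+h_k^s$ --- so the block admits, for any $\delta>0$, a finite-rank model of rank $N(\delta)$ (independent of $k,Q$) with error of operator norm $\le\delta$ times that block's norm, and, $\m$ being singular, a piecewise-\emph{constant} model already does this. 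The upper regularity $\m\in\AF^s_+$ enters the whole argument only through $\m(Q)\le\cb_+h_k^s$.

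\emph{Step 3: summation.} In the range $2l>\Nb$, i.e.\ $\theta<1$ and $L_{(\theta),\m}=L_{1,\m}$, combining the two observations over the grid and over the layers --- the summation is controlled because the fine layers contribute blocks too small (by $\m(Q)\le\cb_+h_k^s$) to produce singular values above a given level, so effectively only $k\lesssim\log(1/t)$ layers matter, and $\sum_{k\lesssim\log(1/t)}(\text{number of relevant cubes})$ is geometric with exponent governed by the identity $s/\theta=s-\Nb+2l$ --- yields $\bD^\theta\le C\|\phi^2\|_{(\theta),\m}^\theta$. In the range $2l\le\Nb$, i.e.\ $\theta\ge1$, the weight must be localised as well: for each $t>0$ one runs the dyadic stopping time selecting, along every branch of the tree, the largest $Q$ with $\|g_{k_Q}\|_\infty\int_Q\phi^2\,d\m\le t$ (with the appropriate logarithmic weighting when $\theta=1$). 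Using H\"older's inequality on the parent cube $Q^+$, namely $\int_{Q^+}\phi^2\,d\m\le\big(\int_{Q^+}\phi^{2\theta}\,d\m\big)^{1/\theta}\m(Q^+)^{1-1/\theta}$ (resp.\ its Orlicz analogue), together with $\m(Q^+)\le\cb_+h_{Q^+}^s$ and the identity $s/\theta=s-\Nb+2l$, the failure of the stopping condition on $Q^+$ becomes $\big(\int_{Q^+}\phi^{2\theta}\,d\m\big)^{1/\theta}\gtrsim t$; summing over the boundedly overlapping parents bounds the number $\Nc(t)$ of stopping cubes by $C\,t^{-\theta}\|\phi^2\|_{(\theta),\m}^\theta$. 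The part of $M_\phi\g(\1-\D)^{-l}\g^*M_\phi$ assembled from layers at or below the local stopping scale has operator norm $\lesssim t$, so it does not contribute to $n(\,\cdot\,,\cdot)$ near the level $t$; the complementary, coarse part is approximated, within a fixed fraction of $t$ in operator norm, by a piecewise-constant kernel operator subordinate to the stopping cubes, whose rank is $\lesssim\Nc(t)$. Hence $n(Ct,\,M_\phi\g(\1-\D)^{-l}\g^*M_\phi)\le C'\,t^{-\theta}\|\phi^2\|_{(\theta),\m}^\theta$, which is the assertion.

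\emph{Main obstacle.} The genuinely hard part is the approximation lemma underlying Steps 2--3, in particular for $\theta\ge1$ (equivalently $2l\le\Nb$, where $G_{2l}$ is truly singular): that the coarse weighted Bessel potential lying above the stopping cubes can be replaced, up to a prescribed fraction of $t$ in operator norm, by an operator of rank comparable to the number of stopping cubes. This is Birman and Solomyak's ``piecewise-polynomial approximation'', but it must be recast so that (i) a piecewise-\emph{constant} model suffices, because $\m$ is $s$-regular and singular; (ii) the weight $\phi$ lies only in $L_{(\theta),\m}$, not in $L_\infty$; and (iii) the $s$-regularity constants propagate through the dyadic layers uniformly in $\phi$. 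Everything else --- the reductions of Step 1 and the summation for $\theta<1$ --- is soft. Note finally that no ellipticity of $\Ab$ is used for this upper bound: only the boundedness of its principal symbol and the upper regularity $\m\in\AF^s_+$ enter.
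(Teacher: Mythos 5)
The paper does not actually prove Theorem \ref{Est.Thm}: it is imported verbatim from \cite{RS}, \cite{RT} (scalar weights), and the only argument the paper supplies is the remark that the scalar estimate carries over to matrix weights componentwise, plus Corollary \ref{cor.estimates} via Ky Fan and the variational principle. Your Step 1 reproduces exactly that soft reduction (note that factoring $\Ab_0=(\1-\D)^{-l/2}\cdot B$ with $B$ of order $0$ gets you to the Bessel potential in one line, without the patchwise principal-symbol surgery), and your Steps 2--3 attempt to re-derive the scalar estimate itself by a Birman--Solomyak piecewise-approximation scheme adapted to $\AF^s_+$ measures, which is indeed the family of techniques behind the cited results. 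So the plan is in the right spirit, but as a standalone proof it has genuine gaps at its core.

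Concretely: (i) the assertion that ``the part assembled from layers at or below the local stopping scale has operator norm $\lesssim t$'' is false for your stopping rule $\|g_{k_Q}\|_\infty\int_Q\phi^2\,d\m\le t$. Since $\|g_k\|_\infty\sim 2^{k(\Nb-2l)}$ \emph{increases} in $k$ when $2l\le\Nb$, while $\int_{Q'}\phi^2\,d\m$ need not decrease along subcubes (take $\phi^2$ essentially supported, together with a nondegenerate portion of $\m$, in a subcube $Q'\subset Q$ of side $2^{-k}\ll 2^{-k_Q}$: testing on $\chi_{Q'}$ shows the fine part has norm $\gtrsim 2^{k(\Nb-2l)}\int_{Q'}\phi^2\,d\m\gg t$ even though $Q$ satisfies the stopping condition). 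The stopping functional must be the \emph{local} $L_{(\theta),\m}$-norm of $\phi^2$ (i.e.\ $\int_Q\phi^{2\theta}d\m$, with the Orlicz average when $\theta=1$), and the within-cell residual must then be bounded by a scale-invariant local embedding inequality of the type $\int_Q|u-P_Qu|^2\phi^2d\m\le C\,\|\phi^2\|_{(\theta),\m\restriction Q}\,\|u\|^2_{\Ht^l}$ --- this is where the identity $\theta=\frac{s}{s-\Nb+2l}$ and $\m\in\AF^s_+$ genuinely enter, and it is nowhere established; your block bound $\|g_k\|_\infty\int_Q\phi^2d\m$ is too lossy to substitute for it. (ii) The claim that a piecewise-\emph{constant} model of rank $N(\delta)$, uniform in $k,Q$, suffices ``because $\m$ is singular'' is unsubstantiated; in Birman--Solomyak-type arguments the degree of the local polynomial approximation is dictated by $l$, and for larger $l$ constants do not give the required gain. (iii) The critical case $\theta=1$ (i.e.\ $2l=\Nb$) is waved through with ``the Orlicz analogue'', but the H\"older step and the superadditivity count must be replaced by Orlicz duality and a superadditivity property of the $L\log L$ functional --- this case is delicate enough to be the subject of \cite{RS}. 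In short, what you present is a plausible programme whose hardest lemma (which you partly acknowledge) is precisely the content of the cited papers; the paper itself bypasses all of this by quoting \cite{RS}, \cite{RT} and adding only the componentwise matrix reduction.
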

   \begin{cor}\label{cor.estimates}\begin{enumerate} \item[(i)] For $V=W^*UW$, the estimate \eqref{MainEstimate} takes the form     
     \begin{equation}\label{main.cor1}
       \bD^{\theta}(\Tb(V))\le C \|V\|_{(\theta),\m}^{\theta};
     \end{equation}
     \item[(ii)] If $V=V^*$ is a Hermitian matrix function, then
         \begin{equation}\label{main.cor2}
           \bD_{\pm}^{\theta}(\Tb(V))\le C \|V_{\pm}\|_{(\theta),\m}^{\theta},
         \end{equation}
         where $V_{\pm}(x)=|V(x)|\pm V(x)$,\, $|V(x)|=(V(x)^*V(x))^{\frac12}$.
\end{enumerate}
   \end{cor}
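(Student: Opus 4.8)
The plan is to derive both parts of Corollary~\ref{cor.estimates} from Theorem~\ref{Est.Thm} by purely ``soft'' means: a convenient factorization of the weight $V$, the submultiplicativity inequality $\s_{j+k-1}(XY)\le\s_j(X)\s_k(Y)$ for singular numbers of a product, and the standard variational bounds for the counting functions $n$ and $n_\pm$. Throughout I would use that, by the quadratic-form formula \eqref{QformDefinition}, the operator $\Tb(V)=\Ab^*\g^*V\g\Ab$ depends on the factors $W,U$ only through $V$, so any factorization may be chosen. Given $V\in L_{(\theta),\m}$, I would take the pointwise polar decomposition $V(x)=\Omega(x)|V(x)|$, $|V(x)|=(V(x)^*V(x))^{1/2}$, $\|\Omega(x)\|\le 1$; using $(VV^*)^{1/2}=\Omega|V|\Omega^*$ this yields the balanced factorization $V(x)=W_1(x)\,\Omega(x)\,W_2(x)$ with $W_1=(VV^*)^{1/4}\ge0$ and $W_2=(V^*V)^{1/4}\ge0$. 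Since $W_1(x)^2$ and $W_2(x)^2$ have the same singular numbers as $V(x)$, one has $\|W_1^2\|_{(\theta),\m}=\|W_2^2\|_{(\theta),\m}=\|V\|_{(\theta),\m}$; putting $\Zb_i=W_i\g\Ab$ we get $\Tb(V)=\Zb_1^*\,\Omega\,\Zb_2$, and Theorem~\ref{Est.Thm} gives $\bD^{2\theta}(\Zb_i)\le C\|V\|_{(\theta),\m}^{\theta}$ for $i=1,2$.

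For part~(i), from $\s_{2j-1}(\Zb_1^*\Omega\Zb_2)\le\s_j(\Zb_1)\,\|\Omega\|\,\s_j(\Zb_2)\le\s_j(\Zb_1)\s_j(\Zb_2)$ and the elementary fact that $\s_j(\Zb_1)\s_j(\Zb_2)>\la$ forces $\max\{\s_j(\Zb_1),\s_j(\Zb_2)\}>\sqrt\la$, one gets $n(\la,\Tb(V))\le 2\bigl(n(\sqrt\la,\Zb_1)+n(\sqrt\la,\Zb_2)\bigr)$. Multiplying by $\la^\theta=(\sqrt\la)^{2\theta}$ and letting $\la\to0$ then gives $\bD^{\theta}(\Tb(V))\le 2\bigl(\bD^{2\theta}(\Zb_1)+\bD^{2\theta}(\Zb_2)\bigr)\le C\|V\|_{(\theta),\m}^{\theta}$, which is \eqref{main.cor1}. (Alternatively one may use a genuinely symmetric factorization: $W=(VV^*+V^*V)^{1/4}$ with $U=W^{-1}VW^{-1}$, extended by $0$ on $\Ker V$, satisfies $\|U\|_{\infty,\m}\le1$ and $\|W^2\|_{(\theta),\m}\le\sqrt2\,\|V\|_{(\theta),\m}$ by operator monotonicity of the square root; then $\Tb(V)=\Zb^*U\Zb$ with $\Zb=W\g\Ab$, and Theorem~\ref{Est.Thm} applies directly once one notes $\s_{2j-1}(\Zb^*U\Zb)\le\|U\|_{\infty,\m}\,\s_j(\Zb)^2$.)

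For part~(ii), when $V=V^*$ I would set $V_\pm=|V|\pm V\ge0$, so that $V=\tfrac12(V_+-V_-)$ and $\Tb(V)=\tfrac12\Tb(V_+)-\tfrac12\Tb(V_-)$ with both $\Tb(V_\pm)\ge0$ (their quadratic forms are manifestly non-negative). Subtracting the non-negative operator $\tfrac12\Tb(V_-)$ cannot increase the positive part of the spectrum, so the variational description of $n_+$ gives $n_+(\la,\Tb(V))\le n_+\bigl(\la,\tfrac12\Tb(V_+)\bigr)=n(2\la,\Tb(V_+))$. Applying part~(i) to $\Tb(V_+)\ge0$, for which the eigenvalue and singular-number counting functions coincide, and tracking the factor $2^{-\theta}$, one arrives at $\bD_+^{\theta}(\Tb(V))\le C\|V_+\|_{(\theta),\m}^{\theta}$; the bound for $\bD_-^{\theta}$ is identical with $V_+$ replaced by $V_-$. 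This is \eqref{main.cor2}.

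I do not expect a genuine obstacle here: the real analytic work is Theorem~\ref{Est.Thm}, which is already available. The one step requiring a little care is the reduction — choosing a factorization of a general, possibly non-Hermitian, $V$ whose positive factors carry exactly the same $L_{(\theta),\m}$-size as $V$ itself, which rests on the $L_{(\theta),\m}$-(quasi)norm of a matrix weight being a function only of the pointwise singular numbers of $V(x)$ — together with keeping the homogeneity correct through the substitutions $\la\mapsto\sqrt\la$ in (i) and $\la\mapsto2\la$ in (ii).
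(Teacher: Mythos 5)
Your argument is correct and essentially the paper's own: part (i) is the Ky Fan/Horn singular-value inequality for a product after factorizing $V$ with positive outer factors and a contractive middle factor (the paper notes any $V\in L_{(\theta),\m}$ admits such a factorization $V=W^*UW$), combined with Theorem \ref{Est.Thm}, and part (ii) is the variational principle using $\pm\Tb(V)\le\Tb(V_{\pm})$, which is exactly what your decomposition $\Tb(V)=\tfrac12\Tb(V_+)-\tfrac12\Tb(V_-)$ expresses. Your write-up only adds explicit constants and a concrete balanced polar factorization, which changes nothing of substance.
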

   \begin{proof}Statement (i) follows from the Ky Fan inequality for singular values of the product of compact operators. Statement (ii) follows from the variational principle for self-adjoint compact operators since $\pm\Tb(V)\le \Tb (\pm V)$.
\end{proof}  
     \begin{rem}1. In the sources cited above, estimates are given for scalar weight functions; they carry over to the matrix ones automatically, component-wise, see, e.g., explanations in \cite{RTInt}.\\
     2. In the above  papers, a considerably stronger versions of upper singular numbers estimates are stated, for counting functions themselves, $n(\la,\Tb)
\le C \la^{-\theta}\|V\|_{(\theta),\m}^\theta$, unlike \eqref{MainEstimate} here, where only the upper \emph{asymptotic} estimates are given. The asymptotic estimates in \eqref{MainEstimate} are sufficient for the applications in this paper.  \\
3. Under some relations between $s,l,\Nb, $ the conditions imposed on the measure $\m$ may be relaxed, in particular, by changing the requirement $\m\in\AF^s$ to $\m\in\AF^s_+$ or  $\m\in\AF^s_-$, cf. \cite{RT}
 and \cite{RS}.
     \end{rem}
     
\section{Proving the spectral asymptotics formulas}
 \subsection{Approximation approach}
The first step in the spectral studies of the operator \eqref{general form} consists in replacing the weight function $V$ by smooth ones. This is done following the general perturbation scheme developed in 1970-s by M.Birman and M.Solomyak. The asymptotic approximation lemma, established first in \cite{BSFaa}, can be formulated in our terns as follows:
\begin{lem}\label{BSlem} Let the given compact operator $\Tb\in\Sib_{\theta}$ can be approximated by operators $\Tb_{\e}$, so that $\bD^{\theta}(\Tb-\Tb_\e)\to 0$ as $\e\to 0$. Then
\begin{equation}\label{convergence}
\bD^{\theta}(\Tb)=\lim_{\e\to^0}\bD^{\theta}(\Tb_\e), \, \bd^{\theta}(\Tb)=\lim_{\e\to 0}\bd^{\theta}(\Tb_\e).
\end{equation}
In particular, if the limits on the right-hand sides in \eqref{convergence} are equal, then $\bD^{\theta}(\Tb)=\bd^{\theta}(\Tb)$; this means that \eqref{convergence}
 describes the power asymptotics of the singular values of $\Tb$. In the case when the operators $\Tb$ and $\Tb_{\e}$ are self-adjoint, then \eqref{convergence} also holds with $\bD_{\theta}, \bd_{\theta}$ replaced by $\bD_{\pm}^{\theta}, \bd_{\pm}^{\theta}$, so the above equality means the power asymptotics for positive and for negative eigenvalues of the limiting operator $\Tb$, separately. 
 \end{lem}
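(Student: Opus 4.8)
The plan is to deduce the lemma from the Weyl--Ky~Fan inequality for counting functions of singular numbers,
\[
  n(\la_1+\la_2,\Tb_1+\Tb_2)\le n(\la_1,\Tb_1)+n(\la_2,\Tb_2),\qquad \la_1,\la_2>0,
\]
valid for arbitrary compact operators $\Tb_1,\Tb_2$, together with its self-adjoint counterpart $n_{\pm}(\la_1+\la_2,\Tb_1+\Tb_2)\le n_{\pm}(\la_1,\Tb_1)+n(\la_2,\Tb_2)$, which follows from the min-max principle and the elementary bound $n_{\pm}(\la,\Tb)\le n(\la,\Tb)$ for self-adjoint $\Tb$. These are the only external ingredients; all the rest is bookkeeping of limits.

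First I would fix an auxiliary parameter $\vt\in(0,1)$ and apply the inequality to the splitting $\Tb=\Tb_\e+(\Tb-\Tb_\e)$ with $\la_1=(1-\vt)\la$, $\la_2=\vt\la$, which gives $n(\la,\Tb)\le n((1-\vt)\la,\Tb_\e)+n(\vt\la,\Tb-\Tb_\e)$. Multiplying by $\la^{\theta}$ and taking $\limsup_{\la\to0}$ (after the obvious rescaling $\nu=(1-\vt)\la$, resp.\ $\nu=\vt\la$, inside each term) yields
\[
  \bD^{\theta}(\Tb)\le (1-\vt)^{-\theta}\,\bD^{\theta}(\Tb_\e)+\vt^{-\theta}\,\bD^{\theta}(\Tb-\Tb_\e).
\]
Letting $\e\to0$ annihilates the last term by hypothesis, and then letting $\vt\to0$ gives $\bD^{\theta}(\Tb)\le\liminf_{\e\to0}\bD^{\theta}(\Tb_\e)$. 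Interchanging the roles of $\Tb$ and $\Tb_\e$ in the splitting (and using that $\Tb_\e-\Tb$ and $\Tb-\Tb_\e$ have identical singular numbers, so $\bD^{\theta}(\Tb_\e-\Tb)=\bD^{\theta}(\Tb-\Tb_\e)\to0$) produces the reverse bound $\limsup_{\e\to0}\bD^{\theta}(\Tb_\e)\le\bD^{\theta}(\Tb)$. Together these give $\bD^{\theta}(\Tb)=\lim_{\e\to0}\bD^{\theta}(\Tb_\e)$; the finiteness hidden in $\Tb\in\Sib_{\theta}$ is consistent with, and indeed forced by, $\bD^{\theta}(\Tb_\e)<\infty$ plus smallness of $\bD^{\theta}(\Tb-\Tb_\e)$.

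For the $\bd^{\theta}$ statement I would run exactly the same two splittings but take $\liminf_{\la\to0}$ after multiplying by $\la^{\theta}$, using the bracketing $\liminf_{\la}(x_\la+y_\la)\le\liminf_{\la}x_\la+\limsup_{\la}y_\la$ to peel the main term ($\bd^{\theta}$) away from the error term (governed by $\bD^{\theta}(\Tb-\Tb_\e)\to0$). After sending $\e\to0$ and then $\vt\to0$ this gives both $\limsup_{\e\to0}\bd^{\theta}(\Tb_\e)\le\bd^{\theta}(\Tb)$ and $\bd^{\theta}(\Tb)\le\liminf_{\e\to0}\bd^{\theta}(\Tb_\e)$, hence $\bd^{\theta}(\Tb)=\lim_{\e\to0}\bd^{\theta}(\Tb_\e)$. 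The self-adjoint assertion is obtained verbatim, replacing $n$ by $n_{\pm}$ and $\bD^{\theta},\bd^{\theta}$ by $\bD^{\theta}_{\pm},\bd^{\theta}_{\pm}$, and invoking the $n_{\pm}$-version of the Weyl inequality above.

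No genuine obstacle is expected: this is the classical Birman--Solomyak perturbation argument. The one place demanding care is the handling of the nested limits $\la\to0$, then $\e\to0$, then $\vt\to0$, and specifically choosing correctly whether a $\limsup$ or a $\liminf$ over $\la$ is the quantity to be paired with the vanishing error term — the natural spot for a sign-of-inequality slip.
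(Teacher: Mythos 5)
Your argument is correct: the paper itself states Lemma \ref{BSlem} without proof, citing \cite{BSFaa}, and your proof is precisely the classical Birman--Solomyak argument behind that reference, based on the Ky~Fan/Weyl inequality $n(\la_1+\la_2,\Tb_1+\Tb_2)\le n(\la_1,\Tb_1)+n(\la_2,\Tb_2)$ (and its $n_{\pm}$ analogue), the two symmetric splittings $\Tb=\Tb_\e+(\Tb-\Tb_\e)$ and $\Tb_\e=\Tb+(\Tb_\e-\Tb)$, and the correctly ordered limits in $\la$, $\e$, $\vt$ with the bracketing $\liminf(x+y)\le\liminf x+\limsup y$ for the $\bd^{\theta}$ part. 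No gaps; the self-adjoint case goes through exactly as you indicate since $n_{\pm}(\la,\Tb-\Tb_\e)\le n(\la,\Tb-\Tb_\e)$ lets the same hypothesis $\bD^{\theta}(\Tb-\Tb_\e)\to0$ control the error term for the signed counting functions.
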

 In our setting, we are going to approximate the weight function $V$ by sufficiently regular ones $V_{\e}$, so that the  role of $\Tb$ in Lemma \ref{BSlem} is played by the  operator $\Tb(V)$, and the role of $\Tb_{\e}$ is played by $\Tb(V_\e)$. Under this concretization, the relation $\bD^{\theta}(\Tb-\Tb_\e)\to 0$ is granted  by the convergence $\|V-V_\e\|_{(\theta),\m}\to 0$, according to estimate \eqref{MainEstimate} in Theorem \ref{Est.Thm}. 

Therefore, when establishing asymptotics for singular numbers  or eigenvalues  of $\Tb(V)$, we need to construct the approximating densities $V_\e,$ such that 
\begin{enumerate}
  \item[(i)] Densities $V_\e$ approximate the density $V$ in the sense $\|V-V_\e\|_{(\theta),\m}\to 0,$ as $\e\to 0$, \\
      and\\
  \item[(ii)] For the operators $\Tb(V_\e)$, the singular numbers  asymptotics 
  \begin{equation*}
    \nb^{\theta}(\Tb(V_{\e}))=\F(V_\e),
  \end{equation*}
  resp. the eigenvalues asymptotics 
  \begin{equation*}
    \nb^{\theta}_{\pm}(\Tb(V_{\e}))=\F_{\pm}(V_\e),
  \end{equation*}
  are known, where $\F$, $\F_{\pm}$ are functionals over matrix functions, possessing the property of continuity in the norm  in $L_{(\theta),\m}$. 
\end{enumerate}
As shown in \cite{RT}, \cite{RS}, in order to grant  condition (ii), one needs that the approximating density $V_{\e}$ be defined not only on the support of the measure $\m$, but, at least, in some neighborhood of this support and be smooth there. The construction of such approximation is presented in the next subsection. 
  \subsection{Approximation}\label{Sect.approximation}    
Having a  density $V$ in $L_{(\theta),\m},$  thus defined initially only on $\Si=\supp(\m)$, we approximate it  in the sense of $L_{(\theta),\m}$ by smooth matrix functions, defined on $\R^\Nb$. In \cite{GRInt}, \cite{RT}, etc., such approximation was constructed for a Lipschitz surface $\Si$. Here, since we need a more general result, we use a different construction.
 
 \begin{lem}\label{lem.appr} Let $\m$ be a finite Borel measure with compact support $\Si\subset\R^{\Nb}$, $V\in L_{(\theta),\mu}$. We extend $V$ by zero outside $\Si$. Then, for any $\e>0$, there exists a function $V_\e\in C_0^\infty(\R^\Nb)$, such that 
 \begin{equation}\label{appr}
   \|V-V_\e\|_{L_{(\theta),\mu}}\le \e.
 \end{equation}
 Moreover, if the matrix-function $V$ is Hermitian, then $V_\e$ can be chosen to be Hermitian as well.
 \end{lem}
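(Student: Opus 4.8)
The plan is to reduce the claim to a standard density argument in $L_{(\theta),\m}$ combined with a mollification that respects the Hermitian structure. The key point is that $V$ is only given $\m$-a.e.\ on the compact set $\Si$, while we need a genuinely smooth function on all of $\R^\Nb$; the bridge is that $\m$ is a finite Borel measure with compact support, so $L_{(\theta),\m}$ behaves like a reasonable function space in which simple functions are dense.

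\textbf{Step 1: approximation by bounded functions.} First I would reduce to the case where $V$ is bounded. Indeed, $V\in L_{(\theta),\m}$ means each matrix entry lies in the relevant (Lebesgue or Orlicz) space, and in all three cases ($\theta>1$, $\theta<1$, $\theta=1$) bounded functions are norm-dense: truncating $V$ at level $R$ (say, replacing $V(x)$ by $V(x)$ when $|V(x)|\le R$ and by $0$ otherwise, or by a suitable radial cutoff of the spectrum of $V(x)$, which preserves Hermiticity) gives a bounded Hermitian matrix function $V^{(R)}$ with $\|V-V^{(R)}\|_{L_{(\theta),\m}}\to 0$ as $R\to\infty$, by dominated convergence in the Lebesgue case and by the corresponding convergence property of the Luxemburg norm in the Orlicz case.

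\textbf{Step 2: approximation by continuous functions on $\R^\Nb$.} Given a bounded Hermitian $V^{(R)}$ defined on $\Si$ and extended by zero, I would approximate it in $L_{(\theta),\m}$ by a continuous compactly supported Hermitian matrix function $\psi$ on $\R^\Nb$. Here one uses Lusin's theorem together with the Tietze extension theorem applied entrywise: for any $\de>0$ there is a closed set $F\subset\Si$ with $\m(\Si\setminus F)<\de$ on which $V^{(R)}$ is continuous, and then $V^{(R)}|_F$ extends to a continuous function on $\R^\Nb$ with the same sup-norm bound; applying this to the Hermitian and anti-Hermitian-free structure — concretely, extend each real entry $\re V^{(R)}_{jk}$ and $\im V^{(R)}_{jk}$ for $j\le k$ and build $\psi$ to be Hermitian by symmetry — yields a continuous Hermitian $\psi$ agreeing with $V^{(R)}$ on $F$ and bounded by $R$ everywhere. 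Since $\m(\Si\setminus F)$ is small and both functions are bounded by $R$, the $L_{(\theta),\m}$-distance $\|V^{(R)}-\psi\|_{(\theta),\m}$ is controlled by $R$ times the $L_{(\theta),\m}$-norm of the indicator of a set of small $\m$-measure, which tends to $0$.

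\textbf{Step 3: mollification.} Finally, given a continuous compactly supported Hermitian $\psi$, convolving with a standard Friedrichs mollifier $\chi_{\eta}$ produces $\psi_{\eta}:=\psi*\chi_{\eta}\in C_0^\infty(\R^\Nb)$, still Hermitian since convolution acts entrywise with a real nonnegative kernel, and $\psi_{\eta}\to\psi$ uniformly on $\R^\Nb$ as $\eta\to 0$ because $\psi$ is uniformly continuous; uniform convergence with uniformly bounded supports implies $\|\psi-\psi_{\eta}\|_{(\theta),\m}\to 0$ (again the measure is finite, so uniform smallness dominates any of the three norms). Chaining Steps~1--3 and choosing parameters $R$ large, $\de$ small, $\eta$ small in turn produces $V_\e\in C_0^\infty(\R^\Nb)$ with $\|V-V_\e\|_{L_{(\theta),\m}}\le\e$, Hermitian when $V$ is. The main obstacle I anticipate is purely bookkeeping in the Orlicz case $\theta=1$: one must check that Luxemburg-norm convergence follows from the measure-theoretic statements above (dominated convergence and smallness of indicators of sets of small measure), which holds because $\Psi$ is a finite $N$-function and $\m$ is finite, but it requires a careful invocation of the relevant Orlicz-space lemmas rather than a one-line appeal to dominated convergence.
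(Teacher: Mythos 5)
Your argument is correct, but it follows a different (equally standard) chain of reductions than the paper's proof. The paper reduces entrywise to scalar functions, uses density of simple functions in $L_{(\theta),\m}$ so that only characteristic functions $\chi_E$ need approximating, invokes regularity of the finite Borel measure to squeeze $E$ between a compact $F$ and an open $G$ with $\m(G\setminus F)<\e$, takes a Urysohn/Tietze function $\f$ equal to $1$ on $F$ and $0$ off $G$ (so that $\|\f-\chi_E\|_{(\theta),\m}$ is estimated directly from $\m(G\setminus F)$), and finally replaces $\f$ by a $C_0^\infty$ function uniformly close to it; Hermiticity is recovered at the very end by passing to $\frac12(V_\e+V_\e^*)$. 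You instead truncate to reduce to bounded $V$, apply Lusin plus Tietze to produce a continuous matrix function agreeing with $V$ outside a set of small $\m$-measure, and then mollify, keeping Hermiticity throughout via the symmetric entrywise extension and the real nonnegative convolution kernel. Both routes rest on the same two pillars — regularity of a finite Borel measure on $\R^{\Nb}$ and the fact that the $L_{(\theta),\m}$-norm of a bounded function supported on a set of small $\m$-measure is small, together with the $\Delta_2$-type Orlicz facts you flag for $\theta=1$ — so neither is more general; the paper's version is a bit more economical (the distance to $\chi_E$ is computed by hand, and the matrix case is dispatched componentwise plus symmetrization), while yours avoids the scalar reduction and the a posteriori symmetrization. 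Two minor bookkeeping points in your write-up: Tietze does not give compact support by itself, so multiply by a smooth cutoff equal to $1$ on a neighborhood of $\Si$ (before or after mollifying), which changes nothing on $\supp\m$; and the density of bounded functions in the Orlicz space used in your truncation step is exactly where the $\Delta_2$ property of $\Psi$ enters, as you correctly anticipate.
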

 \begin{proof}Essentially, the proof is contained in \cite{MakPodBook}, see there Theorem 13.3.3, or \cite{Tr}, Theorem 3.2. For completeness, we reproduce this proof here, with necessary modifications. Note that the case $\theta=1,$ this means, for $V$ in the Orlicz space, is formally not covered by this theorem, however the reasoning is, actually, the same. 

Note first, that it suffices to consider approximations of a scalar function, since for a matrix one, we can approximate each component separately.
 
 Recall that  a 'simple function' in $L_{(\theta),\mu}$ is a \emph{finite} sum of step functions,
 \begin{equation*}
   f=\sum_{j\le j_0}a_j\chi_{E_j},
 \end{equation*}
  where the sum is finite and $\chi_{E_j}$ are characteristic functions of disjoint $\mu$-measurable sets. Since the set of simple functions is dense in $L_{(\theta),\m}$ it suffices to show that the characteristic function of a $\mu$-measurable set $E$, $\m(E)<\infty$, can be approximated in the required way. For a given  $\e>0$, we fix an  open set $G$ and compact set $F$ so that $F\subset E\subset G$ and $\m(G\setminus F)<\e$, this is possible since the measure $\m$, being Borel,  is regular (in the sense of general measure theory, see, e.g. \cite{MakPodBook}, Theorem 13.3.2.) By the Tietze lemma, there exists a continuous function $\f$ on $\R^\Nb$ such that $0\le \f\le1$, $\f(x)=1$ on $F$ and $\f=0$ outside $G.$ Then, for $1\le p<\infty$,
  
  \begin{equation*}
    \|\f-\chi_E\|_p^p=\int_G |\f(x)-\chi_\e(x)|^p\mu(dx)\le \m(G\setminus F)<\e.
  \end{equation*}
  Similar calculation takes care of the Orlicz space estimate. It remains to approximate the continuous function $\f$ fy a $C_0^{\infty}(G)$ function $\psi$ so that $\|\f-\psi\|_{L_\infty(G)}<\e^{1/\theta}$.

The remark on Hermitian matrix $V(x), \, x\in\Si,$ is taken care of by the fact that if some matrix $V_\e$ approximates $V$, then $\re(V_\e)=\frac{V_\e+V_\e^*}{2}$ approximates  $V$ as well. 
 \end{proof}

 It is important to notice that if the function $V(x)$ is real and $V(x)>c>0$
 in $\Si\cap\Om$, where $\Om$ is an open set, then the approximating function $V_\e$ can be chosen satisfying $V\e>c/2$ in any proper subset $\Om'\subset\Om$.  
 \subsection{Sign separation in the scalar case and its breakdown  in the matrix case}\label{matrix breakdown} We explain here how the case of a \emph{real} \emph{scalar} non-signdefinite density $V$ was treated in \cite{RS}, \cite{RT}, \cite{GRInt}. After  having  constructed the approximating \emph{real} smooth function $V_{\e}(x)$, $x\in \R^{\Nb}$, we can construct a further approximation, with a special property. Namely, the function $R(x)$ was constructed
so that its positive part $R_+$ and its negative part $R_-$ are spatially separated: this means that $\dist(\supp(R_+),\supp(R_-))>0$. After that, we can introduce smooth cut-off function that equals 1 in a neighborhood of $\supp(R_+)$ and vanishes in a neighborhood of $\supp(R_-)$. Then, using the fact that the commutant of the operator $\Ab$ and such cut-off function is a pseudodifferential operator of order $-l-1$, we can show that the asymptotic characteristics  of the positive/negative  eigenvalues of the operator $\Tb(R)$ coincide with the corresponding characteristics of $\Tb(R_{\pm})$,
\begin{equation*}
  \bd^{\theta}_{\pm}(\Tb(R))= \bd^{\theta}_{\pm}(\Tb(R_{\pm})),  \bD^{\theta}_{\pm}(\Tb(R))= \bD^{\theta}_{\pm}(\Tb(R_{\pm})).
\end{equation*}
Thus, the problem was reduced to finding eigenvalue asymptotics for operators with sign-definite (say, nonnegative), $R$. All this goes through with success in the matrix case as well.

The last transformation of the problem consists in the reduction to a completely different operator, namely, to an integral operator on the set $\Si$, in the following way:  we write our operator $\Tb(R)$ as a product
\begin{equation}\label{T(R)}
\Tb(R)=(R^{\frac12}\g\Ab)^*(R^{\frac12}\g\Ab)=(\g R^{\frac12}\Ab)^*(\g R^{\frac12}\Ab),
\end{equation}
since the multiplication by the smooth function $R^{\frac12} $ and the restriction $\g$ commute.

And here we perform the main trick! The (non-negative) eigenvalues of the self-adjoint  operator $\Tb(R)$ in \eqref{T(R)} coincide with eigenvalues of the product of factors in  \eqref{T(R)} taken in the inverse order, 
\begin{equation*}
  \Sbb(R)=(\g R^{\frac12}\Ab)(\g R^{\frac12}\Ab)^*.
\end{equation*}
This operator $\Sbb(R)$ can be (after some deliberation) written as 

\begin{equation*}
  \Sbb(R)=\g(\g(R^{\frac12}\Ab^2 R^{\frac12}))^*.
\end{equation*}
Since any order $-2l$ pseudodifferential operator $\Ab^2$ is an integral operator in $\R^{\Nb}$ with the kernel $\Kc(x,y)$ having leading diagonal singularity
of order $2l-\Nb$ in $x-y$, or, in some cases, with a logarithmic factor, our operator $\Sbb(R)$ is the integral operator in $L_2(\Si,\m)$ with kernel 
$R^{\frac12}(x)\Kc(x,y)R^{\frac12}(y)$.

After this last reduction, we need only to look for situations, namely, measures $\m$, where the eigenvalue asymptotics for such integral operators has been already found. This kind of results were, in particular, obtained for $\mu$ being the Hausdorff measure on a compact Lipschitz surface of dimension $s$ in $\R^{\Nb}$, see \cite{RTLip1,RTLip2}. Some other measures are also admissible (see \cite{GRInt}, \cite{RTInt}). 

Now, let us consider the matrix case. Let the matrix $R(x)$ (we can suppose it smooth already) be non-negative on $\Si.$ Then the reasoning above goes through without any formal changes. However, if the matrix $R(x)$ is 
not sign-definite for $x$ on a set of positive $\m$-measure, no localization can spatially separate  positive and negative parts of $R(x)$.
One might have tried to perform the above reasoning in a non-symmetric form,
namely, to consider instead of \eqref{T(R)}, the representation
\begin{equation*}
  \Tb(R)=(\g R\Ab)^*(\g \Ab).
\end{equation*}
Then the same commutation  trick would lead to the operator $\Sbb'(R)$ instead of $\Sbb(R)$,

\begin{equation}\label{Sb'}
 \Sbb'(R)=(\g \Ab)(\g R\Ab)^*=\g(\g R\Ab^2)^*.
\end{equation}
Operator \eqref{Sb'}, of course, has the same eigenvalues as $\Tb(R)$, but it is non-selfadjoint, and the existing perturbation theorems for non-selfadjoint operators are not applicable since they  concern not eigenvalues but singular   numbers of operators, moreover, it is known that even a very weak perturbation of a non-selfadjoint operator may drastically change characteristics of the spectrum.

The same kind of complication arises when we consider the problem of finding the asymptotics of singular numbers of $\Tb(R)$ for a non-Hermitian matrix $R$. Here, the product-inversion transformation fails even earlier. In fact, while the nonzero eigenvalues of the non-selfadjoint product of two operators do not depend on the order of multiplication,  a similar statement for singular numbers is wrong: after the commutation, we obtain an operator with different singular numbers. Therefore, the reduction to integral operators in $L_{2,\m}$ does not help.  Note that the same obstacle we encounter even in the scalar case, when we are looking for singular numbers asymptotics for   $\Tb(R)$ with a non-real scalar function $R$.

In the  section to follow we are going to present an alternative approach to this problem. 

\section{Eigenvalue asymptotics of restricted operators}\subsection{An abstract theorem} 
Our derivation of formulas for singular numbers and signed asymptotics of eigenvalues follows some ideas implemented  in \cite{MSZLMF}, \cite{SZFA}, however the  specifics of singular measures,  matrix weights, and non-self-adjoint operators require certain modifications. Namely, instead of operators of the form
$\Ab \Bb\Ab$, with bounded $\Bb$ and self-adjoint compact operator $\Ab$, as this is done  in \cite{SZFA}, Sect.5., with various commutation relations, we consider operators which can be written  in the abstract (equivalent) forms
\begin{equation}\label{Operator}
  \Tb(V)=(\g\Ab)^*V(\g\Ab)=(\g\Ab)^*(V\g\Ab)=(V^* \g\Ab)^*(\g\Ab),
\end{equation}
where $\Ab$ is a compact  operator in the Hilbert space $\Hc$, $\g$ is a linear (trace) operator from $\Hc$ to a Hilbert space $\Gc$, bounded on the range of $\Ab$,  and $V$ is a  bounded operator in $\Gc$. It is supposed that  $\Tb(V)$  belongs to the ideal $\Sib_\theta(\Hc)$. In this section, we find conditions granting that 

\begin{equation}\label{mod}
  \Tb(|V|)-|\Tb(V)|\in \Sib_\theta^{\circ},
\end{equation}
and, in the case $V$ is self-adjoint in $\Gc$,
\begin{equation}\label{sign}
  \Tb(V)_{\pm}-\Tb(V_{\pm})\in \Sib_\theta^{\circ}.
\end{equation}

These conditions are formulated in the following way.

Suppose that  there exists a lift $\Vt$ of the operator $V$ from $\Gc$ to the space $\Hc$, as a bounded operator, such that it respects the trace operator $\g$:
\begin{equation*}
  V\g\Ab=\g\Vt\Ab, \, V^*\g\Ab^*=\g\Vt^*\Ab^*,
\end{equation*}
and, simultaneously,
\begin{equation*}
  V^*\g\Ab=\g\Vt^*\Ab, V\g\Ab^*=\g\Vt\Ab^*.
\end{equation*}
We suppose that for the compositions and commutators involving $\Vt$ and $\Ab,$ the following relations hold
\begin{equation}\label{conditions1}
  \g\Ab\in \Sib_{2\theta}; \, \g \Vt\Ab\in\Sib_{2\theta};\, \g\Vt^*\Ab\in\Sib_{2\theta}, \,\g [\Vt,\Ab],\, \g [\Vt^*,\Ab^*]\in \Sib_{2\theta}^{\circ}.
\end{equation}
Additionally, it is supposed that
 there exists a family $Y_\e$, $\e>0$, of positive bounded operators in $\Hc$ such that
\begin{equation}\label{conditions2}
 \bD^{2\theta}( \g(|\Vt|^2-Y_\e^2)\Ab)\le \e,
\end{equation}
and the commutation relations hold:
  \begin{equation}\label{conditions3}
  \g[Y_\e,\Ab]\in \Sib_{2\theta}^{\circ}, \, \g[Y_\e,\Ab^*]\in \Sib_{2\theta}^{\circ}.
\end{equation}
\begin{thm}\label{Thm.separ}Suppose that  conditions \eqref{conditions1}, \eqref{conditions2}, \eqref{conditions3} are satisfied. Then the relation \eqref{mod} is valid. If, additionally, the operator $V$ is self-adjoint in $\Gc,$ then \eqref{sign} holds.
\end{thm}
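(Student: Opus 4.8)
\textbf{Two soft reductions.} First, \eqref{sign} reduces to \eqref{mod}. If $V=V^{*}$, then by linearity of $V\mapsto\Tb(V)$ one has $\Tb(V)=\Tb(V_{+})-\Tb(V_{-})$ and $\Tb(|V|)=\Tb(V_{+})+\Tb(V_{-})$, while on the operator side $\Tb(V)=\Tb(V)_{+}-\Tb(V)_{-}$ and $|\Tb(V)|=\Tb(V)_{+}+\Tb(V)_{-}$; subtracting and halving gives $\Tb(V)_{\pm}-\Tb(V_{\pm})=\tfrac12(|\Tb(V)|-\Tb(|V|))$, so each side of \eqref{sign} is $\tfrac12$ of the left side of \eqref{mod}. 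Second, \eqref{mod} follows from
\begin{equation}\label{plangoal}
 \Delta\ :=\ \Tb(V)^{*}\Tb(V)-\Tb(|V|)^{2}\ \in\ \Sib_{\theta/2}^{\circ}.
\end{equation}
Indeed, by the elementary inequality $(X+Y)^{1/2}\le X^{1/2}+Y^{1/2}$ for positive operators, applied with $X=Q^{2}$, $Y=|P^{2}-Q^{2}|$ (and with $P,Q$ interchanged), one gets $|P-Q|\le|P^{2}-Q^{2}|^{1/2}$ for all positive compact $P,Q$; hence $\s_{n}(P-Q)^{2}\le\s_{n}(P^{2}-Q^{2})$, so $n(\la,P-Q)\le n(\la^{2},P^{2}-Q^{2})$ and therefore $\bD^{\theta}(P-Q)\le\bD^{\theta/2}(P^{2}-Q^{2})$. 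Taking $P=|\Tb(V)|$, $Q=\Tb(|V|)$ turns \eqref{plangoal} into \eqref{mod}, so it remains to prove \eqref{plangoal}.

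\textbf{Normal form of $|\Tb(V)|^{2}$.} Put $\Zb:=\g\Ab\in\Sib_{2\theta}$, $\Psi:=\Zb^{*}\Zb\in\Sib_{\theta}$, $\Phi:=\Zb\Zb^{*}\in\Sib_{\theta}$. From $\Tb(V)=(\g\Ab)^{*}(V\g\Ab)=(\g\Ab)^{*}(\g\Vt\Ab)$ and $\g\Vt\Ab=\Zb\Vt+\g[\Vt,\Ab]$ with $\g[\Vt,\Ab]\in\Sib_{2\theta}^{\circ}$ from \eqref{conditions1}, one obtains $\Tb(V)=\Psi\Vt+\Sib_{\theta}^{\circ}$, and hence $\Tb(V)^{*}=\Vt^{*}\Psi+\Sib_{\theta}^{\circ}$ by taking adjoints. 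Multiplying and discarding the errors by the H\"older property of weak Schatten ideals ($\Sib_{p}\cdot\Sib_{q}\subseteq\Sib_{r}$ and $\Sib_{p}\cdot\Sib_{q}^{\circ}\subseteq\Sib_{r}^{\circ}$ for $r^{-1}=p^{-1}+q^{-1}$) yields
\begin{equation}\label{plannf}
 \Tb(V)^{*}\Tb(V)=\Vt^{*}\Psi^{2}\Vt+\Sib_{\theta/2}^{\circ}.
\end{equation}
Only the ``matched'' commutators occur here: $[\Vt,\Ab]$ flanked on the left by $\g$, and $[\Vt^{*},\Ab^{*}]$ flanked on the right by $\g^{*}$ (the adjoint of the former), which is exactly what \eqref{conditions1} supplies. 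A point to watch throughout is that every occurrence of $\Ab\Ab^{*}$ must be kept between $\g$ and $\g^{*}$, since $\g\Ab\Ab^{*}\g^{*}=\Phi\in\Sib_{\theta}$ whereas $\Ab$ alone lies in no Schatten class; likewise every weight factor must be kept adjacent to $\Ab$ or $\Ab^{*}$ inside a block $\g(\cdots)\Ab$ or $\Ab^{*}(\cdots)\g^{*}$.

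\textbf{Normal form of $\Tb(|V|)^{2}$ — the core step.} The same scheme applied to $\Tb(|V|)=(\g\Ab)^{*}(|V|\g\Ab)$ would require $\g[\,|\Vt|,\Ab\,]\in\Sib_{2\theta}^{\circ}$, which is \emph{not} available: passing to $|\Vt|=(\Vt^{*}\Vt)^{1/2}$ destroys the commutator bound even when $\Vt$ has it, and the self-adjoint $|\Vt|$ has no matching partner among $\Ab,\Ab^{*}$. This is precisely the obstruction that conditions \eqref{conditions2}, \eqref{conditions3} circumvent. The plan is, for fixed $\e>0$: (i) rearrange $\Tb(|V|)^{2}$, using the lift relations and the matched commutators of \eqref{conditions1}, so as to compare it modulo $\Sib_{\theta/2}^{\circ}$ with an operator $\mathfrak S_{\e}$ in which $|\Vt|$ is replaced by $Y_{\e}$ only through the square $|\Vt|^{2}=\Vt^{*}\Vt$ — the resulting discrepancy being assembled from the single block $\g(|\Vt|^{2}-Y_{\e}^{2})\Ab$, whose $\bD^{2\theta}$ is $\le\e$ by \eqref{conditions2}, so that after the fixed $\Sib_{2\theta}$/$\Sib_{\theta}$-sandwiching it contributes an operator $E_{\e}$ with $\bD^{\theta/2}(E_{\e})\to0$ as $\e\to0$; (ii) since $Y_{\e}$ is self-adjoint and \emph{both} $\g[Y_{\e},\Ab]$ and $\g[Y_{\e},\Ab^{*}]$ lie in $\Sib_{2\theta}^{\circ}$ by \eqref{conditions3} (hence also their adjoints), repeat the computation that produced \eqref{plannf} to get $\mathfrak S_{\e}=Y_{\e}\Psi^{2}Y_{\e}+\Sib_{\theta/2}^{\circ}$, commute $Y_{\e}$ through the central $\Psi^{2}$ (again by \eqref{conditions3}), and undo the replacement $Y_{\e}^{2}\leftrightarrow\Vt^{*}\Vt$ with \eqref{conditions2} once more, arriving at $\Tb(|V|)^{2}=\Vt^{*}\Psi^{2}\Vt+\Sib_{\theta/2}^{\circ}+E_{\e}$. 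Comparing with \eqref{plannf} gives $\bD^{\theta/2}(\Delta)\le\bD^{\theta/2}(E_{\e})\to0$, whence $\Delta\in\Sib_{\theta/2}^{\circ}$ and \eqref{plangoal}. The two reductions and \eqref{plannf} are routine bookkeeping with the weak-ideal H\"older estimates once the pairing $\g(\cdots)\g^{*}\in\Sib_{\theta}$, $\g(\cdots)\Ab\in\Sib_{2\theta}$ is respected; the genuine obstacle is step (i) — showing that $\Tb(|V|)^{2}$ can be arranged, modulo $\Sib_{\theta/2}^{\circ}$, so that the irregular factor $|\Vt|$ never has to be transported past $\Ab$ or $\Ab^{*}$ on its own, only the square $|\Vt|^{2}$ (handled by \eqref{conditions1}) or its regular surrogate $Y_{\e}^{2}$ (handled by \eqref{conditions3}), the passage between them being governed solely by the scalar bound of \eqref{conditions2}. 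This is the step on which the whole construction of the family $Y_{\e}$ rests.
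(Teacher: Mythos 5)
Your reduction of \eqref{sign} to \eqref{mod} is the same algebraic identity the paper uses (cf.\ \eqref{hermite}), and your normal form $\Tb(V)^*\Tb(V)=\Vt^*\Psi^2\Vt$ modulo $\Sib_{\theta/2}^{\circ}$, with $\Psi=(\g\Ab)^*(\g\Ab)$, is a legitimate use of \eqref{conditions1}. But there are two genuine gaps. First, the ``elementary inequality'' $(X+Y)^{1/2}\le X^{1/2}+Y^{1/2}$ is false for noncommuting positive operators: take $P=\mathrm{diag}(1,0)$ and $Q=\frac12\left(\begin{smallmatrix}1&1\\1&1\end{smallmatrix}\right)$, two rank-one projections; then $P^{1/2}+Q^{1/2}=P+Q$ has the eigenvalue $1-1/\sqrt2<1$, on whose eigenvector $(P+Q)^{1/2}$ acts by the larger number $(1-1/\sqrt2)^{1/2}$, so $(P+Q)^{1/2}\not\le P^{1/2}+Q^{1/2}$. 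Moreover, even granting a substitute, the passage from $\pm(P-Q)\le R$ to $|P-Q|\le R$ is invalid for self-adjoint operators. Hence your pointwise bound $\s_n(P-Q)^2\le\s_n(P^2-Q^2)$ is unproved; what is true, and what the paper invokes, is the Birman--Koplienko--Solomyak theorem \cite{BKS}: $\bD^{p}(\Tb_1^2-\Tb_2^2)\le c\e$ implies $\bD^{2p}(\Tb_1-\Tb_2)\le c'\e^{1/2}$ for nonnegative compact $\Tb_1,\Tb_2$. This part is repairable by citation.

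Second, and more seriously, your core step (i) --- a normal form for $\Tb(|V|)^2$ --- is not carried out, and the plan as formulated cannot work. In $\Tb(|V|)^2=(\g\Ab)^*|V|\,(\g\Ab)(\g\Ab)^*\,|V|(\g\Ab)$ the irregular factor $|V|$ occurs \emph{linearly} in each factor, separated by $(\g\Ab)(\g\Ab)^*$; it never enters ``through the square $|\Vt|^2$'', and since no lift or commutator hypothesis is available for $|V|$ or $|\Vt|$, no admissible manipulation brings the two occurrences together. Likewise, ``commuting $Y_\e$ through the central $\Psi^2$'' would require control of $[Y_\e,\g^*\g]$, i.e.\ of $\g Y_\e$ away from the range of $\Ab$, which \eqref{conditions3} does not provide: it only controls $\g[Y_\e,\Ab]$ and $\g[Y_\e,\Ab^*]$, so every manipulation must keep $\g$ attached to a block ending in $\Ab$ or $\Ab^*$ (recall that in the intended application $\g$ alone is unbounded). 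The workable route --- the paper's --- never squares $\Tb(|V|)$: one compares $|\Tb(V)|^2$ with $\Tb(Y_\e)^2$, both of which admit normal forms by \eqref{conditions1}, \eqref{conditions3}, their leading parts differing through the single block $\g(|\Vt|^2-Y_\e^2)\Ab$ controlled by \eqref{conditions2}, giving \eqref{Ye}; then one takes square roots via \cite{BKS} to get \eqref{Ye2}; and finally one compares $\Tb(Y_\e)$ with $\Tb(|V|)$ at \emph{first} order by linearity, $\Tb(Y_\e)-\Tb(|\Vt|)=\Tb(Y_\e-|\Vt|)$ with $0\le Y_\e-|\Vt|\le\e$ for the concrete choice $Y_\e=(|\Vt|^2+\e^2)^{1/2}$, as in \eqref{Ye2t}, and lets $\e\to0$. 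Your intermediate target $\Tb(V)^*\Tb(V)-\Tb(|V|)^2\in\Sib_{\theta/2}^{\circ}$ is in fact an easy consequence of \eqref{mod} once the theorem is proved, but as a stepping stone towards it the proposal leaves precisely the decisive step unestablished.
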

\begin{proof}
From \eqref{Operator}, it follows that $\Tb(V)^*=(\g\Ab)^*(V^*\g\Ab)$.
Using the lifted operator $\Vt$,   we can represent  $\Tb(V)$  as
\begin{equation}\label{moving V}
 \Tb(V)=(\g\Ab)^*(\g\Vt\Ab)=(\g\Vt^*\Ab)^*(\g\Ab),
\end{equation}
and, for $\Tb(V)^*,$
\begin{equation}\label{moving V*}
 \Tb(V)^*=(\g\Vt\Ab)^*(\g\Ab)= (\g\Ab)^*(\g\Vt^*\Ab).
\end{equation}
Now, consider the operator $\Tb(V)^*\Tb(V)=|\Tb(V)|^2$; using \eqref{moving V}, \eqref{moving V*}, we obtain
\begin{gather}\label{|K|2}
|\Tb(V)|^2=(\g\Ab)^*(\g\Vt^*\Ab)(\g\Vt^*\Ab)^*(\g\Ab).
\end{gather}
For the product of two middle terms in \eqref{|K|2}, we have
\begin{gather*}
(\g\Vt^*\Ab)(\g\Vt^*\Ab)^*=\g((\Vt^*\Ab)(\g\Vt^*\Ab)^*)
=\\\nonumber
\g((\g\Vt^*\Ab)(\Vt^*\Ab)^*)^*=\g(\g(\Vt^*\Ab\Ab^*\Vt))^*.
\end{gather*}
Similarly,
\begin{equation}\label{K^2||}
\Tb(Y_\e)^2=\Tb(Y_\e)\Tb(Y_\e)=(\g\Ab)^*\g(\g Y_\e\Ab\Ab^* Y_\e)^*(\g\Ab),
\end{equation}
with the middle term $\g(\g Y_\e\Ab\Ab^*Y_\e)$ (recall that $Y_\e$ is self-adjoint.)
Our aim now is to use   relations \eqref{conditions1}, \eqref{conditions2}, \eqref{conditions3} to show that  operators \eqref{|K|2} and \eqref{K^2||} differ by a  term, vanishing as $\e\to 0$. The outside factors in these products  are the same, therefore we compare their middle terms.
With this in view, we write
\begin{gather}\label{Vaav1}
\Vt^*\Ab\Ab^*\Vt=(\Vt^*\Ab)(\Ab^*\Vt)= (\Ab\Vt^*+[\Vt^*,\Ab])(\Vt\Ab^*+[\Ab^*,\Vt])=\\\nonumber
\Ab|\Vt|^2\Ab^*+\Ab\Vt^*[\Ab^*,\Vt]+[\Vt^*,\Ab]\Vt\Ab^*+[\Vt^*,\Ab][\Ab^*,\Vt],
\end{gather}
and a similar expansion for the middle terms  in \eqref{K^2||}:
\begin{gather}\label{Vaav2}
  Y_\e\Ab\Ab^*Y_\e =(Y_\e\Ab)(\Ab^*Y_\e)=(\Ab Y_\e+[Y_\e,\Ab])(Y_\e\Ab^*+[\Ab^*,Y_\e])=\\\nonumber
\Ab Y_\e^2\Ab^*+\Ab Y_\e[\Ab^*,Y_\e]+[Y_\e,\Ab]Y_\e\Ab^*+[Y_\e,\Ab][\Ab^*,Y_\e].
\end{gather}

The difference of the first terms in \eqref{Vaav1} and \eqref{Vaav2} equals
\begin{equation*}
 \Hb_\e: =\Ab|\Vt|^2\Ab^*-\Ab Y_\e^2\Ab^*= \Ab(|\Vt|^2-Y_\e^2)\Ab^*, 
\end{equation*}
therefore,
\begin{gather*}
  \g \Hb_\e=\g(\Ab(|\Vt|^2-Y_\e^2)\Ab^*)=(\g\Ab)((|\Vt|^2-Y_\e^2)\Ab^*); \,\\\nonumber 
 (\g\Hb_\e)^*=((|\Vt|^2-Y_\e^2)\Ab^*)^*(\g \Ab)^*=(\Ab (|\Vt|^2-Y_\e^2))
 (\g \Ab)^*=\\\nonumber((|\Vt|^2-Y_\e^2)\Ab)
 (\g \Ab)^*+[\Ab,(|\Vt|^2-Y_\e^2)](\g \Ab)^*,
\end{gather*}
and, finally,
\begin{equation}\label{Vaav5}
\g(\g(\Hb_\e))^*=\left(\g(|\Vt|^2-Y_\e^2)\Ab\right)(\g \Ab)^*+\g\left[\Ab,(|\Vt|^2-Y_\e^2)\right](\g \Ab)^* =I_1+I_2.
\end{equation}

In the first term in \eqref{Vaav5}, due to  condition \eqref{conditions2}, 
$\bD^{2\theta}(\g((|\Vt|^2-Y_\e^2)\Ab))\le \e$, therefore $\bD^{\theta}(I_1)\le C \e.$ For $I_2$, the commutator estimate in \eqref{conditions3} gives $I_2\in \Sib_{\theta}^{\circ}$.

We now check that the remaining terms in \eqref{Vaav1}, \eqref{Vaav2}, the ones involving commutators, produce, after substitution into the middle products in \eqref{|K|2}, \eqref{K^2||},  operators in $\Si_{{\theta}}^\circ.$ This is done in the same manner for all such terms.
Consider, for example, $\Ab\Vt^*[\Ab^*,\Vt]$. Then, 
\begin{equation*}
  (\g \Ab\Vt^*[\Ab^*,\Vt])^*=\left((\g \Ab\Vt^*)[\Ab^*,\Vt]\right)^*=[\Ab^*,\Vt]^*(\g \Ab\Vt^*)^*,
\end{equation*}
 therefore, 
 \begin{equation*}
 \g(\g \Ab\Vt^*[\Ab^*,\Vt])^*=(\g[\Ab^*,\Vt]^*)(\g \Ab\Vt^*)^*
 \end{equation*}
and the corresponding term in $\Tb(V)^*\Tb(V)$ equals
\begin{equation}\label{K[]}
 (\g\Ab)^*(\g[\Ab^*,\Vt]^*)(\g \Ab\Vt^*)^*(\g\Ab).
\end{equation}
In this product, all terms except the second one, belong to $\Sib_{2\theta}$, and the second one, by our conditions, belongs to $\Sib_{2\theta}^\circ$. Thus, the product belongs to $\Sib_{{\theta/2}}^\circ$. A similar reasoning takes care of all remaining terms. This proves that
\begin{equation}\label{Ye}
  \bD^{\theta/2}(|\Tb(\Vt)|^2-\Tb(Y_\e)^2)\le C\e.
\end{equation}
 We recall the result in \cite{BKS}: for non-negative compact operators $\Tb_1,\Tb_2$, 
the property $\bD^p(\Tb_1^2-\Tb_2^2)\le c\e$ implies $\bD^{2p}(\Tb_1-\Tb_2)\le c'\e^{\frac12}$. We apply this result to \eqref{Ye}, with $\Tb_1=|\Tb(V)|,\, \Tb_2=\Tb(Y_\e), $ $p=\theta/2$; this gives us
\begin{equation}\label{Ye2}
  \bD^{\theta}(|\Tb(\Vt)|-\Tb(Y_\e))\le C\e^{\frac1\theta}.
\end{equation}
On the other hand, by the condition \eqref{conditions2}, 
\begin{equation}\label{Ye2t}
  \bD^{\theta}(\Tb(Y_\e)-\Tb(|\Vt|))=\bD^{\theta}(\Tb(Y_\e-|\Vt|))\le C \e^{\frac1\theta}.
\end{equation}
Due to the arbitrariness of $\e$, \eqref{Ye2}, \eqref{Ye2t} imply $\bD^{\theta}(\Tb(|V|)-|\Tb(V)|)=0,$ which proves the  statement in  \eqref{mod}.
  As for   \eqref{sign}, for a  self-adjoint operator $V$,
$\Tb(V)_{\pm}=\frac12(|\Tb(V)|\pm \Tb(V))$, and the result follows from the already established closeness  of  $|\Tb(V)|$ and $\Tb(|V|)$:
\begin{gather}\label{hermite}
(\Tb(V))_+=\frac12(|\Tb(V)|+\Tb(V))=
\frac12\left(|\Tb(V)|-\Tb(|V|)\right)+\\\nonumber
\frac12\left(\Tb(|V|)+\Tb(V)\right)=
\frac12\left(|\Tb(V)|-\Tb(|V|)\right)+\Tb(V_+),
\end{gather}
with the first term in $\Sib_{\theta}^{\circ}$,
\end{proof}
\subsection{Concrete approximation}
We are returning to our spectral problem for the operator $\Tb(V)$.  As in the scalar case, using Lemma \ref{lem.appr}, we can suppose that the matrix-valued density $V(x),\, x\in\Si$ admits an extension $\Vt\in C^{\infty}_0$ to the whole space $\R^\Nb$. We need to check the conditions in \eqref{conditions1}, \eqref{conditions2}, \eqref{conditions3}. The first two estimates in \eqref{conditions1} are valid due to Theorem \ref{Est.Thm}. The commutator estimates  follow from the fact that for a smooth matrix-function $\Vt(x)$, the commutator of $\Vt$ and the order $-l$ pseudodifferential operator $\Ab$ is a pseudodifferential operator of order $-l-1$, and Theorem \ref{Est.Thm} gives the required estimate.  Further on, we accept 
$Y_\e(x)=(|\Vt(x)|^2+\e^2)^{\frac12}=(\Vt(x)^*\Vt(x)+\e^2)^{\frac12};$
 it is a smooth function and it serves as a good smooth approximation to $|\Vt|(x)$; conditions \eqref{conditions2}, \eqref{conditions3} follow.

\subsection{Operators on Lipschits surfaces}\label{results} Here  we describe the concrete  setting where the abstract scheme above is applied.  As explained in Sect. \ref{matrix breakdown}, for a compact Lipschitz surface $\Si$ of dimension $s<\Nb$ (and codimension $d=\Nb-s>0$), the role of $\m$ is played by the natural Hausdorff measure on $\Si$, equivalently, the measure  induced by the Lebesgue measure in $\R^\Nb$.  Let $\Ab_0$ be a classical scalar order $-l<0$ pseudodifferential operator,  which we can suppose to be supported in a domain
$\Om\supset \Si$,  $\Ab=\Ab_0\otimes\Eb_m$.  It is known that $\Ab^*\Ab$ is an integral, potential type, operator in $\R^\Nb$ with kernel having weak singularity at the diagonal. 
This kernel has principal part  positively homogeneous of order $\rb=2l-\Nb$, if $\rb$  is not a nonnegative integer, or may contain  $\log$-factor  otherwise.  We do not need explicit expressions for this kernel, since the asymptotics of the spectrum is conveniently expressed via the symbol $\ab_{-l}(x,\x)$, see \cite{RT}, \cite{RTInt}.   Since $\Ab$ is a bounded operator from $L_{2}(\R^\Nb) $
 to the Sobolev space $\Ht^l(\R^\Nb)$, the restriction $\g:\Ht^l(\R^\Nb)\to L_2(\Si)$ is well defined, see Sect.2. For a matrix function $V(x)\in L_{(\theta),\m}$, the operator $\Tb(\Ab, V)$ is defined by the quadratic form $\int\langle V(x)\g\Ab u, \g\Ab u\rangle\m(dx),$ $ u\in L_2(\R^\Nb)$.
A bounded density $\ro(x)$ $x\in\Si,$ is determined by the principal symbol of $\Ab$ and the surface $\Si$, defined by a complicated algorithm, which we do not reproduce here, see Theorem 6.2 in \cite{RT} (the particular expression of $\ro(x)$ is of no importance here). As explained in Sect.2, the results of \cite{RS}, \cite{RT} take care of the case when the matrix $V(x)$
is sign-definite, say, nonnegative, belongs to $L_{(\theta),\m}(\Si)$, and justify  the Weyl asymptotics of eigenvalues
\begin{equation}\label{Weyl}
\nb^{\theta}(\Tb(V))=\int_{\Si}\ro(x)\Tr(V(x)^{\theta})dx.
\end{equation}
 Now, we can use Theorem \ref{Thm.separ}, which, being applied to this concrete situation, gives the following results.
First, let the matrix $V(x)$ be Hermitian for all $x\in\Si$. We approximate it by a smooth matrix $\Vt$ using Lemma \ref{lem.appr}, and due to Lemma \ref{BSlem}, it suffices to prove  the asymptotic  formulas for this approximation.
Then, according to \eqref{mod}, the asymptotic characteristics of positive eigenvalues of $\Tb(V)$ coincide with asymptotic characteristics of eigenvalues of $\Tb(V_{+})$, ditto for negative eigenvalues-- and for those, the asymptotic formulas have been already proved.  We apply \eqref{Weyl}, which  give
\begin{equation*}
  \nb^{\theta}_{\pm}(\Tb(V))=\nb^{\theta}_{\pm}(\Tb(V_{\pm}))=\int_{\Si}\Tr(V_{\pm}(x)^{\theta})\mu(dx).
\end{equation*}
Next, for an \emph{arbitrary matrix} density $V(x)$, we apply \eqref{sign} in Theorem \ref{Thm.separ}, according to which, asymptotic characteristics of the singular numbers of $\Tb(V)$ coincide with the asymptotic characteristics of singular numbers, this means, eigenvalues of the non-negative operator $\Tb(|V|)$. Again, we apply \eqref{Weyl} to the latter operator, which gives
\begin{equation*}
\nb^{\theta}(\Tb(V))=\nb^{\theta}(\Tb(|V|))=\int_{\Si}\Tr(|V(x)|^{\theta})\mu(dx)
\end{equation*}
These formulas prove our Theorem \ref{Main theorem pm} in Sect.2.

\section{Lower estimates} While the eigenvalue asymptotics is established only for measures being the surface measure on Lipschitz surfaces or their unions, it is possible to show that the  order in the eigenvalue estimates in Sect. \ref{Sub.estim}, as well as the singular numbers estimates are order sharp. This kind of lower  estimates for the case of a positive weight $V$ have been declared in \cite{RT}, with reference to  more general  results in \cite{Tr}, Theor. 27.15 and Theor. 28.6. There, it was supposed that the real-valued weight function $V$ is greater than some positive constant, while the operator $\Ab$ was a parametrix of an elliptic  operator.  Here, we can prove a somewhat more general result. We show that,  in the Hermitian matrix  case, under some 
mild  condition imposed on the positive (negative) part of $V(x)$, the order of the upper estimate for positive(negative) eigenvalues of $\Tb(V)$ is sharp, namely, this is supported by a lower estimate of the same power order. Further on, for a not necessarily Hermitian matrix $V(x)$, again, under  some mild regularity condition, we show that the upper estimate for the singular values of $\Tb(V)$ is order sharp, it is supported by  the lower estimate with the same power order.
Such results were not known even in the scalar case,
 \begin{thm}\label{Thm.lower}Let the measure $\mu$ belong to $\AF^{s}$, $0<s\le\Nb$, $\Si=\supp(\mu)$; let the pseudodifferential order $-l$ operator  $\Ab=\Ab_0\otimes \Eb_{m}$ in $\R^\Nb$ be elliptic in some domain $\Om\subset \R^\Nb$ with non-empty intersection with $\Si$;  let $V$ be a matrix density, $V\in L_{(\theta),\mu}(\Si)$, where, as always, $\theta=\frac{s}{s-\Nb+2l}.$ Consider the operator $\Tb=\Tb(V)$. Suppose that $|V(x)|\ge h>0$ in $\Si\cap \Om$.
 \begin{equation}\label{lower.estim}
{\bd^{\theta}}(\Tb)\equiv\lim\inf_{\la\to0} \la^{\theta} n(\la, \Tb)>0;
 \end{equation}
 if, moreover, the matrix $V(x)$ is Hermitian, and $V(x)_+\ge c>0$ for $x\in \Si\cap \Om$, then
 \begin{equation}\label{Lower.estim.positive}
   \bd^{\theta}_+(\Tb)>0,
 \end{equation} 
 and the same for $ \bd^{\theta}_-(\Tb)$, provide $V(x)_-\ge C>0$ in $\Si\cap \Om$. 
  \end{thm}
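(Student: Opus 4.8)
The plan is to reduce the lower estimate to a known positive-weight lower estimate on a small piece of $\Si$ inside $\Om$, using the ellipticity of $\Ab$ there. First I would localize: pick a smaller domain $\Om'$ with $\overline{\Om'}\subset\Om$ and $\Si\cap\Om'\neq\varnothing$, and a cutoff $\chi\in C_0^\infty(\Om)$ with $\chi\equiv1$ on $\Om'$. The idea is to compare $\Tb(V)$ with the operator obtained by replacing $V$ with $\chi^2 V$ (or, in the Hermitian case, with the positive part localized by $\chi$). Since $\Ab$ is elliptic in $\Om$ and we only work with test functions whose images under $\g\Ab$ are, microlocally, concentrated near $\Si\cap\Om'$, we can construct a parametrix $\Bb$ of $\Ab_0$ over $\Om$ and write, on a suitable subspace, $\g\Ab\Bb\chi \approx \g\chi$ modulo operators of order $-l-1$ (hence in $\Sib^\circ_{2\theta}$ after composing with $\g$). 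This is exactly the mechanism already used in Section~\ref{matrix breakdown}: commutators of $\Ab$ with smooth cutoffs are one order smoother and therefore asymptotically negligible.

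The key steps, in order: (1) Choose $\Om'\Subset\Om$ and a cutoff $\chi$. (2) Using Corollary~\ref{cor.estimates} and the commutator-smoothing argument, show that $\bd^\theta(\Tb(V))\ge \bd^\theta(\Tb_{\rm loc})$ for a localized operator $\Tb_{\rm loc}$ built from $V$ restricted to $\Si\cap\Om'$, where on $\Si\cap\Om'$ we have $|V|\ge h>0$. (3) On $\Si\cap\Om'$, factor $V=W^*UW$ with $W=|V|^{1/2}\ge h^{1/2}>0$ and $U$ unitary (the polar factor); then $\Tb_{\rm loc}$ has the same singular numbers as an operator of the form $(\g\Bb')(\cdots)$ comparable, via $|V|\ge h$, to $h\cdot\Tb(\chi^2\cdot\1_{\Si})$ up to asymptotically small terms — more precisely, since $U$ is unitary, $\s_j(\Tb(V_{\rm loc}))$ and $\s_j(\Tb(|V_{\rm loc}|))$ agree asymptotically by \eqref{mod} of Theorem~\ref{Thm.separ} (whose hypotheses hold here because $\chi^2 V$ is compactly supported and we have the required estimates), and $\Tb(|V_{\rm loc}|)\ge h\,\Tb(\chi^2\1)\ge 0$ by the variational principle. (4) Invoke the known scalar positive-weight lower bound (\cite{Tr}, Theorems~27.15 and~28.6, or \cite{RS},~\cite{RT}) for $\Tb(\chi^2\1)$ with the elliptic operator $\Ab_0$ and the Ahlfors-regular measure $\m$, which gives $\bd^\theta(\Tb(\chi^2\1))>0$; combined with (3) this yields \eqref{lower.estim}. (5) In the Hermitian case, replace $|V|$-domination by $V_+\ge c>0$ on $\Si\cap\Om'$: then by \eqref{sign} of Theorem~\ref{Thm.separ}, $\Tb(V)_+$ and $\Tb(V_+)$ have the same asymptotics, and $\Tb(V_+)\ge \Tb(\chi^2 V_+)\ge c\,\Tb(\chi^2\1)\ge0$, so $\bd^\theta_+(\Tb(V))\ge c^\theta\bd^\theta(\Tb(\chi^2\1))>0$; the negative part is identical with $V_-\ge C>0$.

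I expect the main obstacle to be step~(3), making precise the passage from the globally-defined but merely $L_{(\theta),\m}$ density $V$ to the clean comparison $\Tb(|V_{\rm loc}|)\ge h\,\Tb(\chi^2\1)$ while controlling the error terms. The subtlety is that $V$ need not be smooth, so one cannot directly conjugate by the polar unitary $U$ inside pseudodifferential calculus; instead one must first apply the approximation Lemma~\ref{lem.appr} to replace $V$ by a smooth $\Vt$ (preserving $|\Vt|\ge h/2>0$ on $\Si\cap\Om'$, which is legitimate by the remark following Lemma~\ref{lem.appr} applied to the scalar function $|V|$), then run Theorem~\ref{Thm.separ} on $\chi^2\Vt$, and only at the very end use the plain operator monotonicity $\Tb(|\chi^2\Vt|)\ge (h/2)\Tb(\chi^2\1)$. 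A secondary technical point is checking that the cutoff-commutator terms are genuinely in $\Sib^\circ_{2\theta}$ under the weaker Ahlfors hypothesis $\m\in\AF^s$ rather than the Lipschitz-surface hypothesis; but this is covered by Theorem~\ref{Est.Thm} and its Remark~3, since the commutator $[\chi,\Ab]$ has order $-l-1$ and $W^2=\chi^2\in L_\infty\subset L_{(\theta'),\m}$ for the shifted exponent $\theta'$ corresponding to order $l+\tfrac12$, which lies in $\Sib^\circ$. Everything else — the localization inequalities and the final chain of variational estimates — is routine.
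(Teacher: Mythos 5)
Your proposal follows essentially the same route as the paper: smooth the density via Lemma \ref{lem.appr} (keeping $|\Vt|$ bounded below on $\Si\cap\Om$), transfer the bound by Lemma \ref{BSlem}, use Theorem \ref{Thm.separ} (relations \eqref{mod}, resp. \eqref{sign}/\eqref{hermite}) to replace $|\Tb(\Vt)|$, resp. $\Tb(\Vt)_{\pm}$, by $\Tb(|\Vt|)$, resp. $\Tb(\Vt_{\pm})$, and then invoke the Triebel-type variational lower bound, based on ellipticity of $\Ab$ in $\Om$ and lower Ahlfors regularity of $\m$. The only deviation is your extra cutoff localization and domination by $c\,\chi^2\1$, which the paper renders unnecessary by running the ball construction directly for the positive-definite matrix weight with the balls placed inside $\Om$ -- a harmless, if redundant, detour.
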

  \begin{proof} The second statement in the Theorem is a consequence of the  first one, and we present the proof of the latter.

We start by recalling the result  in \cite{Tr}, Theor. 27.15 and Theor. 28.6,
where the scalar case with positive function $V$ is considered, and with $\Ab$ being the parametrix for some elliptic order $l$ operator in $\Om$. The proof in \cite{Tr} of the lower estimate (in our terms)
\begin{equation*}
  \nb^{\theta}(\Tb(V))>0,
\end{equation*}
 goes as follows. For a given $\de>0,$ a system $\U(\de)$ of no less than $c\de^{-s}$ disjoint balls with radius $\de$ can be constructed, so that for each ball $B\in\U(\de),$ the portion of $\mu$ in the concentric, twice smaller ball $B'$, is greater than $c\de^{s}$. Then for each ball $B\in\U(\de)$, a function $\f_{B}\in C_0^{\infty}(B)$ is constructed, which equals $1$ on $B'$ and has $\|\Ab\f_{B}\|_{L_2}\ge C \de^{-l}$ -- this is a consequence of the ellipticity of $\Ab$. After that, it is shown  (see (28.43) in \cite{Tr}) that on the $C \de^{-s}-$ dimensional subspace $\Lc(\de)$ in $L_2(\R^\Nb)$, spanned by the functions  $\f_B,$ $B\in\U(\de)$,
\begin{equation*}
\int|(\Ab u)(x)|^2\mu(dx)\ge C\de^{2l-\Nb+s}\|u\|_{L_2}^2,\, u\in\Lc(\de).
\end{equation*}
From the variational principle, by setting $\de=\la^{\frac{1}{s}}$,  this implies that $n(\la,\Tb(V))\ge C \la^{-\theta}$.

It is easy to see, that all the steps in the reasoning above carry over without complications to the  matrix case, with    $V(x)\ge c>0$, $x\in\Si\cap\Om$ being a positive definite matrix function; the balls in the construction should be chosen contained in $\Om,$ for sufficiently small $\la$.

  Now we consider the case of a non-Hermitian  matrix weight $V(x).$ Again, to obtain a lower estimate for the counting function of singular numbers of $\Tb(V),$ it suffices, by Lemma \ref{BSlem}, to obtain lower estimates for the asymptotic coefficients $\bd^{\theta}(\Tb(V_\e))$, not depending on $\e$, for densities $V_\e$ approximating $V$ in the metric of $L_{(\theta),\m}$. We 
 consider such an approximating function $\Vt,$ constructed according to Lemma \ref{lem.appr}. If we suppose  that $|V(x)|>c>0$ in $\Si\cap\Om$, the approximating matrix-function $\Vt(x)$ can be chosen with the same property (probably, with a slightly smaller $\Om$) with lower estimate for $\bd^{\theta}(\Tb(V_\e))$ independent on $\e$.

Now, we can apply Theorem \ref{Thm.separ}.
Since $\Tb(|\Vt|)-|\Tb(\Vt)|\in\Si_{\theta}^0,$
it follows that $\bd^{\theta}(|\Tb(\Vt)|)=\bd^{\theta}(\Tb(|\Vt|))$. For the last quantity, with a non-negative  matrix $|\Vt|$,  the lower estimate is already known, and this proves the first part of Theorem. The second part, for a Hermitian matrix, just as previously, follows from the first one, due to the relation \eqref{hermite}. This proves Theorem \ref{Thm.lower}.
 \end{proof}

\end{document}